\DeclareSymbolFont{symbolsC}{U}{pxsyc}{m}{n}
\DeclareMathSymbol{\medcirc}{\mathbin}{symbolsC}{7}
\DeclareMathSymbol{\medbullet}{\mathbin}{symbolsC}{8}
\DeclareFontFamily{U}{mathb}{\hyphenchar\font45}
\DeclareFontShape{U}{mathb}{m}{n}{
<-6> mathb5 <6-7> mathb6 <7-8> mathb7
<8-9> mathb8 <9-10> mathb9
<10-12> mathb10 <12-> mathb12
}{}
\DeclareSymbolFont{mathb}{U}{mathb}{m}{n}
\DeclareMathSymbol{\llcurly}{\mathrel}{mathb}{"CE}
\DeclareMathSymbol{\ggcurly}{\mathrel}{mathb}{"CF}
\begin{document}

\title{Convex functions on dual Orlicz spaces}
\author[1]{Freddy Delbaen}
\address{Department of Mathematics, ETH Zürich \&  \newline
Institute of Mathematics, University of Zürich
}
\email{delbaen@math.ethz.ch}
\thanks{${}^1$Part of this work was done while the first named author was
  on visit at Tokyo Metropolitan University.}
\author[2]{Keita Owari}
\address{Department of Mathematical Sciences, Ritsumeikan
  University
}
\thanks{${}^2$Supported in part by JSPS grant number JP17K14210.}
\email{owari@fc.ritsumei.ac.jp}

\runauthor{F. Delbaen and K. Owari}
\runtitle{Convex Functions on Dual Orlicz Spaces}%
\date{}

\ArXiv{1611.06218}

\MSC{46E30, 46A55, 52A41, 46B09, 91G80, 46B10, 91B30}
\keywords{Orlicz spaces, Mackey topology, Komlós's theorem, convex
  functions, order closed sets, risk measures}

\abstract{%
  In the dual $L_{\Phi^*}$ of a $\Delta_2$-Orlicz space $L_\Phi$, that we call a dual Orlicz space, we
  show that a proper (resp. finite) convex function is lower
  semicontinuous (resp. continuous) for the Mackey topology
  $\tau(L_{\Phi^*},L_\Phi)$ if and only if on each order interval
  $[-\zeta,\zeta]=\{\xi: -\zeta\leq \xi\leq\zeta\}$
  ($\zeta\in L_{\Phi^*}$), it is lower semicontinuous
  (resp. continuous) for the topology of convergence in
  probability. For this purpose, we provide the following Komlós type
  result: every norm bounded sequence $(\xi_n)_n$ in $L_{\Phi^*}$
  admits a sequence of forward convex combinations
  $\bar\xi_n\in\conv(\xi_n,\xi_{n+1},...)$ such that
  $\sup_n|\bar\xi_n|\in L_{\Phi^*}$ and $\bar\xi_n$ converges a.s.}

\maketitle

\section{Introduction}
\label{sec:ConvDualBanach}

\noindent\textbf{Notation.} We use the usual  probabilistic notation. 
$(\Omega,\FC,\PB)$ is a probability space and
$L_0:=L_0(\Omega,\FC,\PB)$ stands for the space of (classes modulo
equality $\PB$-a.s. of) finite measurable functions equipped with the
complete metrisable vector topology $\tau_{L_0}$ of convergence in
$\PB$ (in probability). As usual, we identify a measurable function
with the class it generates.  We write $\EB[\xi]:=\int_\Omega\xi d\PB$
whenever it makes sense, and $L_p:=L_p(\Omega,\FC,\PB)$,
$p\in [1,\infty]$, denote the standard Lebesgue spaces.

Problems in financial mathematics often involve convex functions on
the dual $E'$ of a Banach space $E$ (see Section~\ref{sec:MonUtil} for
a motivating example). Dealing with such $f$, the lower semicontinuity
(lsc) and continuity for the Mackey topology $\tau(E',E)$ are basic;
the former ($\Leftrightarrow$ $\sigma(E',E)$-lsc) is necessary and
sufficient (by the Hahn-Banach theorem) for the dual representation
\begin{align*}
  f(x')=\sup_{x\in E}(\langle x,x'\rangle-f^*(x)),\,x'\in E';\quad \text{where }f^*(x)=\sup_{x'\in E'}(\langle x,x'\rangle-f(x'))
\end{align*}

Generally speaking, $\tau(E',E)$ is not easy to deal with, but its
restrictions to bounded sets often have a nice description. The best
known case is $L_\infty=L_1'$: on bounded sets, $\tau(L_\infty,L_1)$
coincides with the topology of $L_0$, a fortiori metrisable (this
result is due to Grothendieck; see \citep{MR0372565}, pp.222-223).
Hence by the Krein-Šmulian theorem, we have

\begin{proposition}
  \label{prop:LinftyLSC1}
  For  proper convex functions $f$ on $L_\infty$, the following are
  equivalent:
  \begin{enumerate}
  \item $f$ is $\sigma(L_\infty,L_1)$-lsc, equivalently
    $\tau(L_\infty,L_1)$-lsc;
  \item $f$ is sequentially $\tau(L_\infty,L_1)$-lsc;
  \item $f$ is lsc on bounded sets for the topology of convergence in
    probability.
  \end{enumerate}
\end{proposition}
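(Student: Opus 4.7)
The plan is to combine Grothendieck's identification (recalled just before the proposition) of $\tau(L_\infty,L_1)$ with $\tau_{L_0}$ on norm-bounded sets, with the Krein-\v{S}mulian theorem applied to the dual Banach space $L_\infty = L_1'$. Throughout I would use the standard fact that, for a \emph{convex} subset of $L_\infty$, $\sigma(L_\infty,L_1)$-closedness and $\tau(L_\infty,L_1)$-closedness coincide (both topologies share the same continuous linear functionals, so Hahn-Banach gives the same closed convex sets); applied to sublevel sets, this already shows that the two notions of lsc listed in (1) are equivalent for convex $f$.

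The implication (1) $\Rightarrow$ (2) is trivial. For (2) $\Rightarrow$ (3): fix $r > 0$ and put $B_r := \{\|\cdot\|_\infty \le r\}$. On $B_r$ one has $\tau(L_\infty,L_1) = \tau_{L_0}$ by Grothendieck, and $\tau_{L_0}|_{B_r}$ is metrisable, so sequential $\tau(L_\infty,L_1)$-lsc on $B_r$ (which is (2) restricted to $B_r$) coincides with $\tau_{L_0}$-lsc on $B_r$, i.e.\ with (3).

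The only real direction is (3) $\Rightarrow$ (1). For each $c \in \mathbb{R}$ I would show that the sublevel set $C_c := \{f \le c\}$ is $\sigma(L_\infty,L_1)$-closed, which yields (1). By the Krein-\v{S}mulian theorem it suffices to check that $C_c \cap B_r$ is $\sigma(L_\infty,L_1)$-closed for every $r > 0$. This set being convex, $\sigma$-closedness is equivalent to $\tau(L_\infty,L_1)$-closedness; and because $B_r$ is itself $\sigma(L_\infty,L_1)$-closed, the question reduces to closedness of $C_c \cap B_r$ in the relative topology $\tau(L_\infty,L_1)|_{B_r}$. By Grothendieck this relative topology equals $\tau_{L_0}|_{B_r}$, which is metrisable, so the required closedness is the same as sequential $\tau_{L_0}$-closedness on $B_r$ -- exactly what (3) supplies.

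The main obstacle, which is more a matter of careful bookkeeping than a genuine difficulty, is juggling the three topologies $\sigma(L_\infty,L_1)$, $\tau(L_\infty,L_1)$ and $\tau_{L_0}$ on $B_r$: one needs Mackey-Arens to convert $\sigma$-closedness into $\tau$-closedness for convex sets, Grothendieck to identify $\tau$ with $\tau_{L_0}$ on bounded sets, and Krein-\v{S}mulian to reduce a global closedness statement to a bounded one, so that assumption (3) can be brought to bear.
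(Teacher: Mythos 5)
Your proof is correct and follows exactly the route the paper intends: the paper states this proposition as an immediate consequence of Grothendieck's identification of $\tau(L_\infty,L_1)$ with $\tau_{L_0}$ on norm-bounded sets together with the Krein-\v{S}mulian theorem, which is precisely the argument you spell out (with Mackey--Arens handling the $\sigma$/$\tau$ equivalence for convex sublevel sets). Your bookkeeping of the three topologies on $B_r$ is accurate, so there is nothing to add.
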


The following result for the $\tau(L_\infty,L_1)$-continuity is also
known for convex risk measures (e.g. \citep{MR2277714,MR2648597}), and
it remains true for finite convex functions; but we could not find a
relevant reference, so we include a short proof in the Appendix.
\begin{proposition}
  \label{prop:ContiLinfty1}
  For any convex function $f:L_\infty\rightarrow\RB$, the following
  are equivalent:
  \begin{enumerate}
  \item $f$ is $\tau(L_\infty,L_1)$-continuous;
  \item $f$ is \textbf{sequentially} $\tau(L_\infty,L_1)$-continuous;
  \item $f$ is continuous for the topology of convergence in
    probability on bounded sets.
  \end{enumerate}
\end{proposition}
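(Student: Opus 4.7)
The plan is to establish the cyclic implications $(1)\Rightarrow(2)\Rightarrow(3)\Rightarrow(1)$, with the last carrying essentially all the content. The first is trivial, and for $(2)\Rightarrow(3)$ I would invoke Grothendieck's theorem used in Proposition~\ref{prop:LinftyLSC1}: on each $\|\cdot\|_\infty$-bounded set $B\subset L_\infty$, the Mackey topology $\tau(L_\infty,L_1)$ coincides with the (metrizable) topology of convergence in probability, so sequential $\tau$-continuity on $B$ is the same as continuity in probability on $B$.

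For $(3)\Rightarrow(1)$ the strategy has two stages. First I would apply Proposition~\ref{prop:LinftyLSC1} to obtain that $f$ is $\sigma(L_\infty,L_1)$-lsc, so $f=f^{**}$ by Fenchel-Moreau and each sublevel set $\{f^*\leq c\}$ is $\sigma(L_1,L_\infty)$-closed. The central claim is that $\{f^*\leq c\}$ is in fact $\sigma(L_1,L_\infty)$-compact for every $c>0$, equivalently uniformly integrable by Dunford-Pettis. Arguing by contradiction, if not, one finds $(y_n)\subset\{f^*\leq c\}$ and sets $A_n$ with $\mathbb{P}(A_n)\to 0$ yet $\int_{A_n}|y_n|\,d\mathbb{P}\geq\epsilon>0$. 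For each fixed $T>0$ the truncation $x_n^T:=T\,\mathrm{sgn}(y_n)\mathbf{1}_{A_n}$ sits in the norm ball of radius $T$ and tends to $0$ in probability, so (3) yields $f(x_n^T)\to f(0)$; the Fenchel-Young inequality then forces $c\geq f^*(y_n)\geq T\epsilon-f(x_n^T)$, which on letting first $n\to\infty$ and then $T\to\infty$ is absurd.

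The second stage is to convert $\sigma$-compactness of the sublevel sets of $f^*$ into Mackey continuity of $f$, in the spirit of Moreau-Rockafellar. Since the nested $\sigma$-compacts $\{f^*\leq\inf f^*+1/n\}$ have nonempty intersection, $\inf f^*$ is attained at some $y_*\in L_1$; replacing $f$ by $g(x):=f(x)-\langle x,y_*\rangle-f(0)$, which preserves (3) and whose Mackey continuity is equivalent to that of $f$, I may assume $f^*\geq 0$ with $f^*(0)=0=\inf f^*$. Given $\alpha>0$, I set $c:=2\alpha$ and $K:=\overline{\mathrm{aconv}}\{f^*\leq c\}$, which remains $\sigma(L_1,L_\infty)$-compact by Krein's theorem, so that $V:=\alpha K^\circ$ is a Mackey $0$-neighbourhood. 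A scaling argument using the convexity of $f^*$ together with $f^*(0)=0$ (for $f^*(y)>c$, the point $\lambda y$ with $\lambda=c/f^*(y)$ lies in $\{f^*\leq c\}\subset K$) then gives $\langle x,y\rangle-f^*(y)\leq\alpha$ for every $y$ and every $x\in V$, so $f\leq\alpha$ on $V$. Upper boundedness on a Mackey $0$-neighbourhood, together with $\tau$-lsc and convexity, yields Mackey continuity at $0$, and hence everywhere.

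I expect the main obstacle to be the compactness claim: it is the place where the pointwise hypothesis (3) has to be upgraded to a uniform control over the entire class $\{f^*\leq c\}$, and the trick of sending the auxiliary truncation level $T$ to infinity only after the sequence index $n$ is delicate. The concluding Moreau-Rockafellar step is then classical convex analysis, modulo the normalisation that relies on the attainment of $\inf f^*$ just established.
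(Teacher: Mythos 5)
Your proof is correct, and its decisive step coincides with the paper's: both deduce from (3), via Young's inequality applied to truncated sign functions supported on small sets, that the sublevel sets $\{\eta\in L_1:f^*(\eta)\leq c\}$ are uniformly integrable, hence relatively $\sigma(L_1,L_\infty)$-compact by Dunford--Pettis (the paper's version of your inequality is $|\EB[\eta\ind_A]|\leq \frac1n\left(f(n\ind_A)\vee f(-n\ind_A)+c\right)$, with the truncation level $n$ playing the role of your $T$ and the two signs replacing $\mathrm{sgn}(y_n)$; your ``first $n\to\infty$, then $T\to\infty$'' order of limits is exactly what makes this work). The divergence is in the second stage: the paper simply invokes Moreau's theorem to pass from weak compactness of the sublevel sets of $f^*$ to $\tau(L_\infty,L_1)$-continuity of $f=f^{**}$, whereas you prove that implication from scratch --- attainment of $\inf f^*$ by nested compactness, translation so that $f^*\geq 0=f^*(0)$, Krein's theorem to make the closed absolutely convex hull $K$ weakly compact, the scaling argument giving $f\leq\alpha$ on the Mackey neighbourhood $\alpha K^\circ$, and the standard ``convex, finite, bounded above on a neighbourhood $\Rightarrow$ continuous'' fact. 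Your second stage is precisely a proof of the relevant direction of Moreau's theorem, and it is correct; what it buys is self-containedness, at the cost of length, where the paper's citation collapses all of it into one sentence. Your handling of (2)$\Rightarrow$(3) via Grothendieck's coincidence of $\tau(L_\infty,L_1)$ with $\tau_{L_0}$ on bounded sets is also the intended one; it is why the paper says only (3)$\Rightarrow$(1) ``deserves a proof''.

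One point to tighten (shared, in fairness, with the paper's own terse write-up): your negation of uniform integrability --- sequences $(y_n)$ in $\{f^*\leq c\}$ and sets $A_n$ with $\PB(A_n)\rightarrow 0$ but $\EB[|y_n|\ind_{A_n}]\geq\varepsilon$ --- is the correct negation only once one knows $\{f^*\leq c\}$ is bounded in $L_1$: uniform absolute continuity, or the tail condition with sets of small measure, is equivalent to uniform integrability only in the presence of $L_1$-boundedness. This is easily supplied. By Young's inequality, $\langle x,y\rangle\leq f(x)+c$ for every $x\in L_\infty$ and every $y$ in the sublevel set, so the sublevel set is pointwise bounded as a family of functionals on the Banach space $L_\infty$ and hence norm bounded in $L_1$ by Banach--Steinhaus; alternatively, $f$ is finite, convex and norm-lsc on $L_\infty$, hence norm-continuous, and boundedness of $f$ on $r\BB_{L_\infty}$ bounds $\|y\|_1$ through $f^*(y)\geq r\|y\|_1-\sup_{r\BB_{L_\infty}}f$. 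With that one line added, your argument is complete.
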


Let $\Phi:\RB\rightarrow\RB$ be a (finite coercive)
\emph{\bfseries Young function}, i.e. an even convex function with
$\Phi(0)=0$ and
$\lim_{x\rightarrow+\infty}\frac{\Phi(x)}x=+\infty$. Then
$\BB_\Phi:=\{\xi\in L_0: \EB[\Phi(\xi)]\leq 1\}$ is a closed convex
solid subset of $L_0$ bounded in $L_1$ containing a non-zero constant,
thus it generates a Banach lattice with the closed unit ball
$\BB_\Phi$, called the \emph{\bfseries Orlicz space}:
\begin{align*}
  L_\Phi:=\mcup_{\lambda>0}\lambda\BB_\Phi=\{\xi\in L_0: \exists \lambda>0\text{ with }\EB[\Phi(\lambda\xi)]<\infty\},
\end{align*}
given the norm $\|\xi\|_\Phi:=\inf\{\lambda>0:\xi\in \lambda
\BB_\Phi\}$ and \emph{\bfseries a.s. pointwise order}. In general,
$L_\infty\subset L_\Phi\subset L_1$ with continuous injections.  The
conjugate $\Phi^*(y):=\sup_x(xy-\Phi(x))$ is again a (finite coercive)
Young function, so the Orlicz space $L_{\Phi^*}$ is similarly defined.

A Young function $\Phi$ is said to satisfy the
\emph{\bfseries\boldmath$\Delta_2$-condition}, denoted by $\Phi\in
\Delta_2$, if $\limsup_{x\rightarrow\infty}\Phi(2x)/\Phi(x)<\infty$,
or equivalently
\begin{align}
  \label{eq:Delta2Equiv}
  p_\Phi:=\inf_{x\geq 0}p_\Phi(x):=\inf_{x\geq 0}\left(\sup_{y>x}\frac{y\Phi'(y)}{\Phi(y)}\right)<\infty,
\end{align}
where $\Phi'$ is the left-derivative of $\Phi$ (see \citep{MR1113700},
Th.~II.2.3). If $\Phi\in\Delta_2$, the dual $L_\Phi'$ of $L_\Phi$ is
identified via $\langle \xi,\eta\rangle=\EB[\xi\eta]$ with
$L_{\Phi^*}$ given an equivalent norm
$\|\xi\|_{(\Phi^*)}:= \sup_{\eta\in \BB_\Phi}\EB[\eta\xi]$; more
precisely
$\|\xi\|_{\Phi^*} \leq \|\xi\|_{(\Phi^*)}\leq 2\|\xi\|_{\Phi^*}$, and
$\EB[\eta\xi]\leq \|\eta\|_\Phi\|\xi\|_{(\Phi^*)}$.
In particular, $L_\Phi$ is reflexive if both $\Phi,\Phi^*\in\Delta_2$;
the condition is also necessary if $(\Omega,\FC,\PB)$ is atomless. 
In the sequel, we suppose $\Phi\in\Delta_2$ unless otherwise
mentioned.

Our basic interest is to understand the
$\tau(L_{\Phi^*},L_\Phi)$-lower semicontinuity and continuity of
convex functions through the sequential convergence in probability on
bounded sets. At this point, we note that there are two possible
interpretations of ``bounded sets''; norm bounded sets, and
\emph{\bfseries order bounded} sets, that is, those
$A\subset L_{\Phi^*}$ contained in an \emph{\bfseries order interval}
$[-\zeta,\zeta]:=\{\xi: -\zeta\leq \xi\leq \zeta\}$,
$0\leq \zeta\in L_{\Phi^*}$, i.e.
\emph{\bfseries dominated in $L_{\Phi^*}$}. Since
$[-\zeta,\zeta]\subset\|\zeta\|_{\Phi^*}\BB_{\Phi^*}$, the order
bounded sets are norm bounded, and in $L_\infty$, the two notions of
boundedness are identical.
  
The core of this papar is a few variants of
\emph{\bfseries Komlós's theorem} in the dual $L_{\Phi^*}$ of a
$\Delta_2$-Orlicz space $L_\Phi$. The classical Komlós theorem
\citep{MR0210177} states that any bounded sequence $(\xi_n)_n$ in
$L_1$ has a subsequence $(n_k)_k$ as well as $\xi\in L_1$ such that
for any further subsequence $(n_{k(i)})_i$, the
\emph{\bfseries Cesàro means} $\frac1n\sum_{i\leq N}\xi_{n_{k(i)}}$
converges a.s. to $\xi$. The basic form of our variants
(Theorem~\ref{thm:Cesaro1}) asserts that under the stronger assumption
of boundedness in $L_{\Phi^*}$ and convergence in $\PB$, a subsequence
can be chosen so that the Cesàro means are
\emph{\bfseries order bounded} in $L_{\Phi^*}$ as well. Its
practically useful consequence (Corollary~\ref{cor:ConvCombUseful}) is
that any \emph{\bfseries norm bounded} sequence in $L_{\Phi^*}$, not
necessarity convergent in $\PB$, has an \emph{\bfseries order bounded}
(and a.s. convergent) sequence of
\emph{\bfseries forward convex combinations} 
$\zeta_n\in\conv(\xi_k;k\geq n)$, $n\geq 1$.  Moreover, if
$(\Omega,\FC,\PB)$ is atomless, this version of Komlós theorem
characterises the $\Delta_2$-Orlicz spaces
(Theorem~\ref{thm:Delta2Ch1}).

In view of the Krein-Šmulian theorem, this form of Komlós theorem
yields that a \emph{\bfseries convex} set $C\subset L_{\Phi^*}$ is
$\sigma(L_{\Phi^*},L_\Phi)$-closed if (and only if\footnotemark) it is
\emph{\bfseries order closed}:
\footnotetext{Regardless of $\Phi\in \Delta_2$ and convexity,
  $\sigma(L_{\Phi^*},L_\Phi)$-closed $\Rightarrow$ order closed
  $\Rightarrow$ norm closed since $L_\Phi$ is identified with the
  \emph{\bfseries order continuous dual of $L_{\Phi^*}$} and norm
  convergent sequences have order convergent subsequences; see
  e.g. \citep[Ch.~14]{MR704021} for details and unexplained
  terminologies.}
\begin{align}
  \label{eq:OrderClosed1}
  \forall \zeta\in L_{\Phi^*},\, C\cap [-\zeta,\zeta]\text{ is closed in }L_0.
\end{align}
In terms of functions, this reads as: a proper convex function $f$ on
$L_{\Phi^*}$ is $\sigma(L_{\Phi^*},L_\Phi)$-lsc 
if (and only if) $f$ is lsc for the topology of $L_0$ on order
intervals in $L_{\Phi^*}$, or explicitly $f(\xi)\leq
\liminf_nf(\xi_n)$ whenever $\xi_n\rightarrow \xi$ in $\PB$ and
$\sup_n|\xi_n|\in L_{\Phi^*}$ (Theorem~\ref{thm:DualRep1}).  A similar
characterisation of the $\tau(L_{\Phi^*},L_\Phi)$-continuity is also
given (Theorem~\ref{thm:MackeyConti}).


The question of the weak* closedness of order closed convex sets in
$L_{\Phi^*}$ is raised by \citep{MR2648595} in the context of
representation of convex risk measures. They claimed in
\citep[Lemma~6]{MR2648595} that this is the case because
$\sigma(L_{\Phi^*},L_\Phi)$ has the following property:
\begin{equation}
  \label{eq:CProp}
  \tag{$C$}
  \begin{minipage}[c]{0.85\linewidth}
    if $\xi_\alpha\rightarrow\xi$ in $\sigma(L_{\Phi^*},L_\Phi)$,
    there exist a sequence of indices $(\alpha_n)_n$ and
    $\zeta_n\in\conv(\xi_{\alpha_k}; k\geq n)$, $n\geq 1$, such that
    $\zeta_n\rightarrow \xi$ a.s. and $\sup_n|\zeta_n|\in L_{\Phi^*}$.
  \end{minipage}
\end{equation}
Unfortunately, this is not correct; (\ref{eq:CProp}) holds (if and)
\emph{\bfseries only if} $L_\Phi$ is reflexive
(\citep{Gao_Xanthos15:_c}). For $(\zeta_n)_n$ in (\ref{eq:CProp})
converges in $\sigma(L_{\Phi^*},L_\Phi)$, thus (\ref{eq:CProp}) would
imply that for any convex set $C\subset L_{\Phi^*}$, its weak* closure
coincides with the \emph{\bfseries sequential weak* closure}
$C_{(1)}:=\{\xi: \xi=w^*\text{-}\lim_{n}\xi_n
{\text{ with }}(\xi_n)_n\subset C\}$,
while any non-reflexive Banach space has a convex set $C$ in the dual
such that $C_{(1)}$ is not weak* closed
(\citep[Th.~2]{MR2943053}; 
see \citep{MR1901532} for the history of problem of sequential weak*
closures which goes back to Banach \citep{banach32:_theor}).
On the other hand, Corollary~\ref{cor:ConvCombUseful} shows that the
property (\ref{eq:CProp}) holds for \emph{\bfseries bounded nets}
(recall that convergent nets need not be bounded). 

\section{Mackey Topology on Orlicz Spaces}
\label{sec:MackeyOrlicz}

The following criterion for $\sigma(L_\Phi,L_{\Phi^*})$-compact sets
is known (e.g.  \citep{MR1113700}, Th.~IV.5.1), but we include a short
proof in the Appendix. Here the $\Delta_2$-condition is not necessary. \nolinebreak
\begin{lemma}
  \label{lem:CompactSets}
  (Regardless of $\Phi\in\Delta_2$,) a set $A\subset L_\Phi$ is
  relatively $\sigma(L_\Phi,L_{\Phi^*})$-compact if and only if for
  each $\xi\in L_{\Phi^*}$, $A\xi:=\{\eta\xi: \eta\in A\}$ is
  uniformly integrable.
\end{lemma}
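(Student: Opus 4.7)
The plan is to treat the two implications separately; in each direction, I would exploit that a $\sigma(L_\Phi,L_{\Phi^*})$-property of $A$ is controlled by the multiplication operators $M_\xi:\eta\mapsto\eta\xi$ for $\xi\in L_{\Phi^*}$, combined with the Dunford--Pettis theorem in $L_1$.

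For necessity, I would note that each $M_\xi:L_\Phi\to L_1$ is bounded by Orlicz--Hölder and weak-to-weak continuous: for $f\in L_\infty$ we have $\langle f,M_\xi\eta\rangle=\EB[f\xi\eta]=\langle f\xi,\eta\rangle$, and $f\xi\in L_{\Phi^*}$ because $L_{\Phi^*}$ is an $L_\infty$-module. Hence relative $\sigma(L_\Phi,L_{\Phi^*})$-compactness of $A$ transfers through $M_\xi$ to relative $\sigma(L_1,L_\infty)$-compactness of $A\xi=M_\xi(A)$, which by the Dunford--Pettis theorem is uniform integrability.

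For sufficiency, I would first apply Banach--Steinhaus to $A$ viewed as a set of functionals on $L_{\Phi^*}$: uniform integrability of each $A\xi$ implies $\sup_{\eta\in A}|\EB[\eta\xi]|<\infty$, so $A$ is bounded in $(L_{\Phi^*})^*$, and therefore, via the natural continuous embedding $L_\Phi\hookrightarrow(L_{\Phi^*})^*$ with equivalent norm, bounded in $L_\Phi$. Then by Banach--Alaoglu the weak* closure $\bar A\subset(L_{\Phi^*})^*$ of $A$ is weak*-compact; since the weak* topology restricts on $L_\Phi$ to $\sigma(L_\Phi,L_{\Phi^*})$, it will suffice to show $\bar A\subset L_\Phi$. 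Here I would invoke the identification of $L_\Phi$ as the \emph{order continuous} dual of $L_{\Phi^*}$: $\eta\in(L_{\Phi^*})^*$ lies in $L_\Phi$ iff $\langle\eta,\xi_n\rangle\to 0$ for every sequence $\xi_n\downarrow 0$ in $L_{\Phi^*}$. For a weak* cluster point $\eta$ of a net $(\eta_\alpha)\subset A$, I would verify this by fixing $\epsilon>0$, using uniform integrability of $A\xi_1$ to extract $\delta>0$ absorbing contributions over sets of measure $<\delta$ uniformly in $\alpha$, and applying Egorov's theorem to the a.s.\ convergence $\xi_n\downarrow 0$ to find such a set outside which $\xi_n$ is uniformly small; splitting $\EB[\eta_\alpha\xi_n]$ over these two parts (using $L_1$-boundedness of $(\eta_\alpha)$ for the bulk term) and then passing to the weak* limit in $\alpha$ yields $\langle\eta,\xi_n\rangle\to 0$.

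The principal obstacle is this final order-continuity step, since without $\Phi\in\Delta_2$ the space $L_\Phi$ sits properly inside $(L_{\Phi^*})^*$, and weak* cluster points of $A$ need not automatically be order continuous. The Egorov plus uniform-integrability argument is precisely what bridges the gap, and it notably uses $\Phi\in\Delta_2$ nowhere, in agreement with the lemma's stated generality.
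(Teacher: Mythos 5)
Your proposal is correct, and its necessity half is exactly the paper's argument: weak-to-weak continuity of the multiplication maps $\eta\mapsto\eta\xi$ from $(L_\Phi,\sigma(L_\Phi,L_{\Phi^*}))$ to $(L_1,\sigma(L_1,L_\infty))$, followed by Dunford--Pettis. The sufficiency half follows the same skeleton as the paper --- embed $A$ in a larger dual, get compactness there, and show that every cluster point lands in $L_\Phi$ --- but your identification step is genuinely different. The paper works in the algebraic dual $L_{\Phi^*}^\#$ (pointwise boundedness plus Tychonoff, in place of your Banach--Steinhaus plus Alaoglu; an immaterial difference) and identifies a cluster point $f$ concretely: uniform integrability of $A=A\cdot 1$ itself gives a density $\eta_0\in L_1$ with $f|_{L_\infty}=\EB[\eta_0\,\cdot\,]$; the bound $\EB[|\eta_0\xi|]\leq\sup_{\eta\in A}\EB[|\eta\xi|]<\infty$ for every $\xi\in L_{\Phi^*}$ places $\eta_0$ in the K\"othe dual of $L_{\Phi^*}$, which is $L_\Phi$; and a truncation argument (test against $\xi\ind_{\{|\xi|\leq n\}}\in L_\infty$, with the tail $\sup_{\eta\in A}\EB[|\eta\xi|\ind_{\{|\xi|>n\}}]\rightarrow 0$ controlled by uniform integrability of $A\xi$) upgrades this to $f(\xi)=\EB[\eta_0\xi]$ on all of $L_{\Phi^*}$. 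You instead verify that the cluster point is a $\sigma$-order continuous functional --- your Egorov/uniform-integrability split is sound: the exceptional set is absorbed by uniform integrability of $A\xi_1$, the complement by $L_1$-boundedness of $A$ and uniform smallness of $\xi_n$ --- and then invoke the identification of $L_\Phi$ with the order continuous dual of $L_{\Phi^*}$, an Ando--Nakano type theorem that the paper itself quotes only in a footnote (Zaanen, Ch.~14). What each approach buys: yours is shorter and never constructs the representing density, but it outsources two nontrivial facts, namely that the decreasing-sequence test characterises $\sigma$-order continuity of norm-bounded functionals, and that the ($\sigma$-)order continuous dual of $L_{\Phi^*}$ is exactly $L_\Phi$; the paper's route re-derives precisely this piece of structure theory from scratch, using only Dunford--Pettis in $L_1$ and the elementary K\"othe-duality description of $L_\Phi$, so it is self-contained at the cost of the explicit truncation computation. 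Both arguments correctly avoid any $\Delta_2$ assumption.
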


\begin{lemma}
  \label{lem:MackeyOrliczRelations}
  $\tau(L_{\Phi^*},L_\Phi)$ is finer than the restriction of
  $\tau_{L_0}$ to $L_{\Phi^*}$, and
  \begin{equation}
    \label{eq:MackeyOrderInterval}
    \forall\zeta\in L_{\Phi^*},\,    \tau(L_{\Phi^*},L_\Phi)|_{[-\zeta,\zeta]}=\tau_{L_0}|_{[-\zeta,\zeta]}.
  \end{equation}
  In particular, $\tau(L_{\Phi^*},L_\Phi)$ is metrisable on order
  bounded sets.  If $\Phi\in\Delta_2$, we have
  \begin{equation}
    \label{eq:MackeyBall}
    \sigma(L_{\Phi^*},L_\Phi)|_{\BB_{\Phi^*}}\subset \tau_{L_0}|_{\BB_{\Phi^*}}\subset \tau(L_{\Phi^*},L_\Phi)|_{\BB_{\Phi^*}}.
  \end{equation}  
\end{lemma}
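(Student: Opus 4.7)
The plan is to describe $\tau(L_{\Phi^*},L_\Phi)$ via Mackey--Arens as the topology of uniform convergence on $\sigma(L_\Phi,L_{\Phi^*})$-compact absolutely convex subsets $K\subset L_\Phi$, and to use Lemma~\ref{lem:CompactSets} to produce such $K$ and exploit their uniform integrability properties. For the first assertion I would first show that the $L_1$-norm topology on $L_{\Phi^*}$ is coarser than $\tau$: for each $c>0$, the $L_\infty$-ball $\{\eta:\|\eta\|_\infty\leq c\}$ lies in $L_\Phi$ (as $L_\infty\subset L_\Phi$) and, for any $\xi\in L_{\Phi^*}\subset L_1$, the set $\{\eta\xi:\|\eta\|_\infty\leq c\}$ is dominated by $c|\xi|\in L_1$, hence uniformly integrable; being convex and norm-closed it is also weakly closed, so Lemma~\ref{lem:CompactSets} makes it $\sigma(L_\Phi,L_{\Phi^*})$-compact. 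The Mackey-continuous seminorm $\sup_{\|\eta\|_\infty\leq c}|\EB[\xi\eta]|=c\,\EB[|\xi|]$ then shows that the $L_1$-norm topology on $L_{\Phi^*}$, which is already finer than $\tau_{L_0}|_{L_{\Phi^*}}$, is coarser than $\tau$.

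For the equality \eqref{eq:MackeyOrderInterval}, only $\tau|_{[-\zeta,\zeta]}\subset \tau_{L_0}|_{[-\zeta,\zeta]}$ remains (WLOG $\zeta\geq 0$). Fix a $\sigma(L_\Phi,L_{\Phi^*})$-compact absolutely convex $K\subset L_\Phi$: by Lemma~\ref{lem:CompactSets}, $\{2\zeta\eta:\eta\in K\}$ is uniformly integrable and hence equi-absolutely-continuous, while Banach--Steinhaus gives $M:=\sup_{\eta\in K}\EB[|\eta|]<\infty$. Since $\tau_{L_0}|_{[-\zeta,\zeta]}$ is metrisable, it suffices to check sequential continuity: for $\xi_n,\xi\in[-\zeta,\zeta]$ with $\xi_n\to\xi$ in $\PB$ and any $\epsilon>0$,
\begin{equation*}
  \sup_{\eta\in K}\EB[|\xi_n-\xi||\eta|]\leq \sup_{\eta\in K}\EB\bigl[2\zeta|\eta|\,1_{\{|\xi_n-\xi|>\epsilon\}}\bigr] + \epsilon M;
\end{equation*}
the first term tends to $0$ by equi-absolute-continuity combined with $\PB(|\xi_n-\xi|>\epsilon)\to 0$, and then $\epsilon\downarrow 0$. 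Metrisability of $\tau|_{[-\zeta,\zeta]}$ follows immediately from the established equality.

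For \eqref{eq:MackeyBall} under $\Phi\in\Delta_2$, the right inclusion is immediate from the first assertion; the main obstacle is the left inclusion $\sigma|_{\BB_{\Phi^*}}\subset \tau_{L_0}|_{\BB_{\Phi^*}}$. Since $\BB_{\Phi^*}$ is merely norm bounded and not order bounded, the dominated argument of the previous paragraph breaks down, and $\Phi\in\Delta_2$ has to be invoked to control the tails of $\eta\in L_\Phi$. As $\tau_{L_0}|_{\BB_{\Phi^*}}$ is metrisable, it suffices to show that $\xi_n\to\xi$ in $\PB$ with $\|\xi_n\|_{\Phi^*}\leq 1$ implies $\xi_n\to\xi$ weakly. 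The Luxemburg identity $\EB[\Phi^*(\xi_n)]\leq 1$ together with superlinearity of $\Phi^*$ yields $L_1$-uniform integrability of $(\xi_n)$, so $\xi_n\to\xi$ in $L_1$ by Vitali (and $\xi\in\BB_{\Phi^*}$ by Fatou). For any $\eta\in L_\Phi$, the $\Delta_2$ condition makes the Luxemburg norm on $L_\Phi$ order-continuous, giving $\|\eta\,1_{\{|\eta|>K\}}\|_\Phi\to 0$ as $K\to\infty$; splitting
\begin{equation*}
  \EB[|\xi_n-\xi||\eta|]\leq K\,\EB[|\xi_n-\xi|] + 4\,\|\eta\,1_{\{|\eta|>K\}}\|_\Phi
\end{equation*}
and sending first $K\to\infty$ then $n\to\infty$ yields $\EB[(\xi_n-\xi)\eta]\to 0$, i.e.\ $\sigma$-convergence.
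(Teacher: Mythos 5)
Your proof is correct in substance, and for the first two assertions it follows essentially the paper's own route: the first claim comes from the Mackey-continuous seminorm generated by the (compact image of the) ball of $L_\infty$, and for (\ref{eq:MackeyOrderInterval}) both you and the paper combine Lemma~\ref{lem:CompactSets} (uniform integrability of $\zeta K$) with a truncation argument --- you split on $\{|\xi_n-\xi|>\epsilon\}$ and use equi-absolute continuity, the paper splits on $\{|\eta|\vee|\zeta|>N\}$ and diagonalises; same idea. Where you genuinely diverge is the left inclusion of (\ref{eq:MackeyBall}): the paper notes that under $\Delta_2$ the ball $\BB_{\Phi^*}$ is $\sigma(L_{\Phi^*},L_\Phi)$-compact (Alaoglu, since $L_{\Phi^*}=L_\Phi'$), applies Lemma~\ref{lem:CompactSets} with the roles of $\Phi$ and $\Phi^*$ exchanged to conclude that $\eta\BB_{\Phi^*}$ is uniformly integrable for each $\eta\in L_\Phi$, and finishes by Vitali. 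You instead extract uniform integrability of $(\xi_n)_n$ itself via de la Vallée-Poussin, pass to $L_1$-convergence, and handle a general $\eta\in L_\Phi$ by Hölder plus order continuity of $\|\cdot\|_\Phi$ under $\Delta_2$. Your version is more hands-on and avoids re-applying Lemma~\ref{lem:CompactSets} on the dual side; the paper's is shorter given the lemma. Both are valid.

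Two blemishes, both fixable and neither fatal. First, in your opening step you argue that the $L_\infty$-ball is $\sigma(L_\Phi,L_{\Phi^*})$-compact because, being convex and norm-closed, it is ``weakly closed.'' Weak closedness here means $\sigma(L_\Phi,L_\Phi')$-closedness; when $\Phi\notin\Delta_2$ the space $L_{\Phi^*}$ may be a proper subspace of $L_\Phi'$, so $\sigma(L_\Phi,L_{\Phi^*})$ is strictly coarser and Mazur's theorem does not bridge the two dual pairs: closedness in the finer weak topology does not give closedness in $\sigma(L_\Phi,L_{\Phi^*})$. This matters because the first two assertions of the lemma are meant to hold without $\Delta_2$ (cf.\ Corollary~\ref{cor:NullInPandWeakStarConv}). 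The repair is immediate: relative compactness from Lemma~\ref{lem:CompactSets} already suffices, since the $\sigma(L_\Phi,L_{\Phi^*})$-closure of the ball is compact, absolutely convex, and dominates your seminorm; alternatively, the ball is the continuous image of the $\sigma(L_\infty,L_1)$-compact ball of $L_\infty$ under the inclusion (which is the paper's implicit argument), or one checks directly that it is cut out by the inequalities $|\EB[\eta\xi]|\leq\EB[|\xi|]$, $\xi\in L_{\Phi^*}$. Second, at the very end you say ``sending first $K\to\infty$ then $n\to\infty$''; the order must be reversed: fix $K$, take $\limsup_n$ (killing the term $K\,\EB[|\xi_n-\xi|]$), and only then let $K\to\infty$ --- as written, the bound $K\,\EB[|\xi_n-\xi|]$ blows up for fixed $n$.
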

\begin{proof}
  The (image in $L_\Phi$) of $\BB_{L_\infty}$ is
  $\sigma(L_\Phi,L_{\Phi^*})$-compact, thus defines a Mackey
  continuous seminorm $\xi\mapsto
  \sup_{\eta\in \BB_{L_\infty}}|\EB[\xi\eta]|=\EB[|\xi|]\geq
  \EB[|\xi|\wedge 1]$, so $\tau(L_{\Phi^*},L_\Phi)$ is finer than the
  restriction of $\tau_{L_0}$. On the other hand, for any
  $\sigma(L_\Phi,L_{\Phi^*})$-compact set $A\subset L_\Phi$ and
  $\zeta\in L_{\Phi^*}$, one has
  $\lim_N\sup_{\eta\in A}\PB(|\eta|\vee|\zeta|>N)= 0$, and
  $p_A(\xi):=\sup_{\eta\in A}|\EB[\eta\xi]| \leq
  \sup_{\eta\in
    A}\EB[|\eta\zeta|\ind_{\{|\eta|\vee|\zeta|>N\}}]+N^2\EB[|\xi|\wedge
  1]$, $\forall N\in\NB$, on $[-\zeta,\zeta]$. A standard
  diagonalisation procedure then shows that $p_A$ is
  $\tau_{L_0}$-continuous on $[-\zeta,\zeta]$, and we see that
  $\tau(L_{\Phi^*},L_\Phi)|_{[-\zeta,\zeta]}\subset
  \tau_{L_0}|_{[-\zeta,\zeta]}$.  Finally, if $\Phi\in \Delta_2$, so
  $L_{\Phi^*}=L_\Phi'$, $\BB_{\Phi^*}$ is
  $\sigma(L_{\Phi^*},L_\Phi)$-compact, thus $\eta\BB_{\Phi^*}$,
  $\eta\in L_\Phi$, are uniformly integrable. Thus $\xi_n\in
  \BB_{\Phi^*}$ and $\xi_n\rightarrow\xi$ in $\PB$ imply
  $\EB[\eta\xi_n]\rightarrow\EB[\eta\xi]$ ($\forall \eta\in L_\Phi$),
  i.e. $\xi_n\rightarrow\xi$ in $\sigma(L_{\Phi^*},L_\Phi)$, which
  proves (\ref{eq:MackeyBall}).
\end{proof}


In the last part, the assumption $\Phi\in\Delta_2$ is used only to
ensure that bounded sequences are relatively
$\sigma(L_{\Phi^*},L_\Phi)$-compact. Thus the same argument shows that:
\begin{corollary}
  \label{cor:NullInPandWeakStarConv}
  (Regardless of $\Phi\in\Delta_2$,) if a sequence $(\xi_n)_n$ in
  $L_{\Phi^*}$ is null in $\PB$ and converges in
  $\sigma(L_{\Phi^*},L_\Phi)$ to $\xi$, then $\xi=0$.
\end{corollary}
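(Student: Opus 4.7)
The plan is to argue $\xi=0$ by showing $\EB[\eta\xi]=0$ for every $\eta\in L_\infty$. Since $L_\infty\subset L_\Phi$, the functional $\xi'\mapsto\EB[\eta\xi']$ is $\sigma(L_{\Phi^*},L_\Phi)$-continuous, and the hypothesis yields $\EB[\eta\xi_n]\to\EB[\eta\xi]$; it will then suffice to produce the competing limit $\EB[\eta\xi_n]\to 0$ from the convergence in probability, which calls for Vitali's theorem.

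To apply Vitali I first need uniform integrability of $(\xi_n)_n$. The pairing $(\eta,\xi)\mapsto\EB[\eta\xi]$ realizes a continuous embedding $L_{\Phi^*}\hookrightarrow L_\Phi'$ whose dual norm is $\|\cdot\|_{(\Phi^*)}$ and is equivalent to $\|\cdot\|_{\Phi^*}$ (independently of $\Delta_2$). Consequently, pointwise convergence of the continuous linear functionals $(\EB[\,\cdot\,\xi_n])_n$ on the Banach space $L_\Phi$, combined with the Banach-Steinhaus theorem, provides $M<\infty$ with $\|\xi_n\|_{\Phi^*}\leq M$, equivalently $\EB[\Phi^*(\xi_n/M)]\leq 1$, for all $n$. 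Since $\Phi^*$ is a coercive Young function, the de la Vall\'ee-Poussin criterion delivers uniform integrability of $\{\xi_n\}_n$.

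For any $\eta\in L_\infty$ the domination $|\eta\xi_n|\leq\|\eta\|_\infty|\xi_n|$ transfers uniform integrability to $(\eta\xi_n)_n$, while $\xi_n\to 0$ in probability implies $\eta\xi_n\to 0$ in probability. Vitali's theorem then gives $\EB[\eta\xi_n]\to 0$; comparing with $\EB[\eta\xi_n]\to\EB[\eta\xi]$ yields $\EB[\eta\xi]=0$ for every $\eta\in L_\infty$, and hence $\xi=0$ a.s.

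No real obstacle is expected. The step demanding $\Phi\in\Delta_2$ in the proof of Lemma~\ref{lem:MackeyOrliczRelations} was the uniform integrability of $\eta\BB_{\Phi^*}$ for every $\eta\in L_\Phi$; here we replace this by the weaker fact that the single sequence $\{\xi_n\}_n$ is uniformly integrable (which follows merely from coercivity of $\Phi^*$) and compensate by restricting the test functions to $L_\infty$, which still suffices to pin down $\xi\in L_{\Phi^*}\subset L_1$.
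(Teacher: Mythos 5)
Your proof is correct, and it secures the key uniform integrability by a genuinely different mechanism than the paper. The paper's ``proof'' is the remark preceding the corollary: in the last part of the proof of Lemma~\ref{lem:MackeyOrliczRelations}, the hypothesis $\Phi\in\Delta_2$ was used only to make norm bounded sets relatively $\sigma(L_{\Phi^*},L_\Phi)$-compact; in the corollary, the set $\{\xi_n:n\geq 1\}\cup\{\xi\}$ is $\sigma(L_{\Phi^*},L_\Phi)$-compact by hypothesis (a convergent sequence with its limit), so Lemma~\ref{lem:CompactSets} (valid for any Young function, applied with $\Phi^*$ in place of $\Phi$) makes $\{\eta\xi_n\}_n$ uniformly integrable for \emph{every} $\eta\in L_\Phi$, whence Vitali's theorem and uniqueness of weak* limits give $\EB[\eta\xi]=0$ for all $\eta\in L_\Phi$. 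You instead obtain norm boundedness from Banach--Steinhaus, correctly invoking the $\Delta_2$-free facts that $L_\Phi$ is complete and that the dual norm induced by the pairing is $\|\cdot\|_{(\Phi^*)}$, equivalent to $\|\cdot\|_{\Phi^*}$; you then upgrade this to uniform integrability of $\{\xi_n\}_n$ itself via coercivity of $\Phi^*$ and de la Vall\'ee-Poussin, and compensate for this weaker information by testing only against $\eta\in L_\infty$, which still separates points of $L_{\Phi^*}\subset L_1$. The delicate small steps check out: $\|\xi_n\|_{\Phi^*}\leq M$ does imply $\EB[\Phi^*(\xi_n/M)]\leq 1$ (the Luxemburg unit ball is modular-closed, by Fatou), and Vitali applies to $(\eta\xi_n)_n$ since $|\eta\xi_n|\leq\|\eta\|_\infty|\xi_n|$ and $\eta\xi_n\to 0$ in $\PB$. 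What each route buys: yours is self-contained and elementary, bypassing Lemma~\ref{lem:CompactSets} entirely (its proof is relegated to the appendix), while the paper's route, granted that lemma, is shorter and yields the stronger intermediate fact that $\EB[\eta\xi_n]\to 0$ for every $\eta\in L_\Phi$, not merely for $\eta\in L_\infty$.
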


\begin{remark}
  \label{rem:MackeyBDD1}
  On $\BB_{\Phi^*}$, $\tau(L_{\Phi^*},L_\Phi)$ is not generally the
  same as the topology of $L_0$. For example, if $A_n\in\FC$ are
  disjoint with $\PB(A_n)>0$, $\xi_n=\PB(A_n)^{-1/2}\ind_{A_n}$ form a
  sequence in $\BB_{L_2}$, null in $\PB$, but $\|\xi_n\|_2\equiv 1$,
  while $\tau(L_2,L_2)$ is the norm topology.
\end{remark}

\begin{proposition}
  \label{prop:MackeyNorm}
  If $\Phi\in\Delta_2$, the following are equivalent for all convex
  $C\subset L_{\Phi^*}$:
  \begin{enumerate}
  \item $C$ is $\sigma(L_{\Phi^*},L_\Phi)$-closed;
  \item $C$ is \textbf{sequentially}
    $\sigma(L_{\Phi^*},L_\Phi)$-closed;
  \item for each $\lambda>0$, $C\cap\lambda\BB_{\Phi^*}$ is closed in
    $L_0$, or equivalently, $\xi_n\in C$ ($\forall n$),
    $\xi_n\rightarrow\xi$ in $\PB$ and
    $\sup_n\|\xi_n\|_{\Phi^*}<\infty$ imply $\xi\in C$.
  \end{enumerate}

\end{proposition}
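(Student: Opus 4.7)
My plan is to establish the cycle $(1)\Rightarrow(2)\Rightarrow(3)\Rightarrow(1)$. The first implication is tautological; the work is in $(2)\Rightarrow(3)$, where the Komlós extraction enters, and in $(3)\Rightarrow(1)$, which reduces to a topological argument via Krein--Šmulian.

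For $(2)\Rightarrow(3)$, I would take $\xi_n\in C$ with $\xi_n\to\xi$ in $\PB$ and $\sup_n\|\xi_n\|_{\Phi^*}<\infty$, and invoke Corollary~\ref{cor:ConvCombUseful} to obtain forward convex combinations $\bar\xi_n\in\conv(\xi_k;k\geq n)$ dominated by some $\zeta\in L_{\Phi^*}$ and converging a.s.\ to some $\tilde\xi$. The norm bound yields uniform integrability of $(\xi_n)_n$ (de la Vallée Poussin applied to $\Phi^*$), so $\xi_n\to\xi$ also in $L_1$; then convex combinations of an $L_1$-convergent sequence have the same $L_1$-limit, while dominated convergence gives $\bar\xi_n\to\tilde\xi$ in $L_1$, forcing $\tilde\xi=\xi$. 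For any $\eta\in L_\Phi$, Hölder gives $|\bar\xi_n\eta|\leq|\zeta\eta|\in L_1$, so dominated convergence again yields $\bar\xi_n\to\xi$ in $\sigma(L_{\Phi^*},L_\Phi)$. Since $\bar\xi_n\in C$ by convexity, assumption (2) forces $\xi\in C$.

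For $(3)\Rightarrow(1)$, I would apply Krein--Šmulian to the dual Banach pair $(L_\Phi,L_{\Phi^*})$, which reduces $\sigma$-closedness of $C$ to that of $K_\lambda:=C\cap\lambda\BB_{\Phi^*}$ for every $\lambda>0$. Since $\lambda\BB_{\Phi^*}$ is itself $\tau_{L_0}$-closed by Fatou applied to $\Phi^*$, condition (3) translates into $K_\lambda$ being $\tau_{L_0}|_{L_{\Phi^*}}$-closed. By Lemma~\ref{lem:MackeyOrliczRelations}, $\tau(L_{\Phi^*},L_\Phi)$ is finer than $\tau_{L_0}|_{L_{\Phi^*}}$, so $K_\lambda$ is $\tau(L_{\Phi^*},L_\Phi)$-closed; being convex, Hahn--Banach (or Mackey--Arens) upgrades this to $\sigma(L_{\Phi^*},L_\Phi)$-closedness, and Krein--Šmulian delivers $(1)$. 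The main subtlety I anticipate is the identification $\tilde\xi=\xi$ in the Komlós step: convergence in probability is not automatically preserved under infinite convex combinations, so the $L_{\Phi^*}$-norm bound must be translated via uniform integrability into an $L_1$-mode of convergence before the limits can be reconciled.
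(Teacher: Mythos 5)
Your proof is correct, and its two legs compare differently with the paper's one-line argument. Your $(3)\Rightarrow(1)$ step is essentially the paper's own proof spelled out: Krein--Šmulian reduces weak*-closedness of $C$ to that of $K_\lambda=C\cap\lambda\BB_{\Phi^*}$, and on these sets the inclusions of Lemma~\ref{lem:MackeyOrliczRelations} and (\ref{eq:MackeyBall}) together with convexity (Mackey-closed convex sets are weak*-closed, by consistency of the dual pair) do the rest. Where you genuinely diverge is $(2)\Rightarrow(3)$: you invoke the Komlós-type Corollary~\ref{cor:ConvCombUseful} and then identify limits via de la Vallée Poussin/Vitali and dominated convergence. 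This is not circular --- the Section~3 Komlós results never use this proposition --- but it is much heavier than needed, because the first inclusion in (\ref{eq:MackeyBall}) already says that a sequence in $\lambda\BB_{\Phi^*}$ converging in probability converges weak* (its proof in Lemma~\ref{lem:MackeyOrliczRelations} is exactly the uniform-integrability argument you rerun by hand); so assumption (2) applies directly to $(\xi_n)_n$ itself, with no need to pass to convex combinations at all. The paper's route thus keeps the proposition at the elementary, pre-Komlós level, which reflects the architecture of the paper: Proposition~\ref{prop:MackeyNorm} is the soft ingredient, and the Komlós machinery enters only later (Theorem~\ref{thm:Main1}) to replace norm bounded sets by order intervals. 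Your version proves the same statement, but spends the paper's main theorem on a step where a one-line citation suffices.
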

\begin{proof}
  By the Krein-Šmulian theorem, (1) $\Leftrightarrow$
  $C\cap\lambda\BB_{\Phi^*}$, $\lambda>0$, are
  $\sigma(L_{\Phi^*},L_\Phi)$-closed, and the three kinds of
  closedness are the same for $C\cap\lambda\BB_{\Phi^*}$ by
  (\ref{eq:MackeyBall}).   
\end{proof}


\section{Komlós-Type Results}
\label{sec:Komlos}

\emph{In the sequel, we suppose $\Phi\in\Delta_2$ so that
  $L_{\Phi^*}=L_\Phi'$ unless otherwise mentioned.}

 Recall that
$\Phi\in\Delta_2$ if and only if
$p_\Phi=\inf_{x\geq 0}p_\Phi(x)<\infty$ where
$p_\Phi(x)=\sup_{y>x}\frac{y\Phi'(y)}{\Phi(y)}$ (see
(\ref{eq:Delta2Equiv})). Let
$q_\Phi:=\lim_{x\rightarrow\infty}\frac{p_\Phi(x)}{p_\Phi(x)-1}=\frac{p_\Phi}{p_\Phi-1}>1$
with the convention $1/0=\infty$.

\begin{lemma}[cf. \citep{MR540367}, Prop.~2.b.5]
  \label{lem:PUpperEstim}
  For any $1\leq q<q_\Phi$, $L_{\Phi^*}$ has
  \textbf{\boldmath an upper $q$-estimate}, that is, there exists a
  constant $C_{q,\Phi^*}>0$ such that for any $n\in \NB$ and
  disjointly supported $\xi_1,...,\xi_n\in L_{\Phi^*}$ (i.e.
  $\xi_k=\xi_k\ind_{A_k}$ with $A_k\in\FC$ pairwise disjoint),
  \begin{align}
    \label{eq:UpperPEstim}
    \Biggl\|\sum_{k\leq n} \xi_k\Biggr\|_{(\Phi^*)}\leq C_{q,\Phi^*}
    \Biggl(\sum_{k\leq n}\|\xi_k\|_{(\Phi^*)}^{q}\Biggr)^{1/q}.
  \end{align}
\end{lemma}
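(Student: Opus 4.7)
The plan is to reduce the question to a power-type scaling of $\Phi^{*}$ near infinity, and then to a direct computation on the Luxemburg norm exploiting disjointness. The first step would be to translate the hypothesis $q<q_{\Phi}$ into the following property of the conjugate Young function: there exists $y_{0}>0$ such that the map $y\mapsto\Phi^{*}(y)/y^{q}$ is non-decreasing on $[y_{0},\infty)$; equivalently, $\Phi^{*}(ay)\leq a^{q}\Phi^{*}(y)$ whenever $a\in[0,1]$ and $ay\geq y_{0}$. This is the Hölder-type conjugacy $1/p_{\Phi}+1/q_{\Phi}=1$ at the level of logarithmic derivatives, i.e.\ $\liminf_{y\to\infty}y(\Phi^{*})'(y)/\Phi^{*}(y)=q_{\Phi}$, which one obtains from $\limsup_{y\to\infty}y\Phi'(y)/\Phi(y)=p_{\Phi}$ by a short calculation using $\Phi^{*}(x)=xy-\Phi(y)$ at $x=\Phi'(y)$ (the decreasing map $r\mapsto r/(r-1)$ turns limsup into liminf); see \citep[Ch.~II]{MR1113700}.

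\textbf{The core estimate.} With the scaling in hand, fix disjointly supported $\xi_{1},\ldots,\xi_{n}\in L_{\Phi^{*}}$, set $\lambda_{k}:=\|\xi_{k}\|_{\Phi^{*}}$ (Luxemburg norm) and $\mu:=\bigl(\sum_{k\leq n}\lambda_{k}^{q}\bigr)^{1/q}$; one may assume $\mu>0$. Disjointness and $\Phi^{*}(0)=0$ give pointwise $\Phi^{*}\bigl(\mu^{-1}\sum_{k}\xi_{k}\bigr)=\sum_{k}\Phi^{*}(\xi_{k}/\mu)$, hence
\begin{equation*}
  \EB\Bigl[\Phi^{*}\bigl(\tfrac{1}{\mu}\textstyle\sum_{k\leq n}\xi_{k}\bigr)\Bigr]=\sum_{k\leq n}\EB[\Phi^{*}(\xi_{k}/\mu)].
\end{equation*}
Split each summand at the level $|\xi_{k}|/\mu=y_{0}$. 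On $\{|\xi_{k}|/\mu\geq y_{0}\}$, since $\lambda_{k}\leq\mu$ and $|\xi_{k}|/\lambda_{k}\geq |\xi_{k}|/\mu\geq y_{0}$, the scaling with $a=\lambda_{k}/\mu$ and $y=|\xi_{k}|/\lambda_{k}$ yields $\Phi^{*}(\xi_{k}/\mu)\leq(\lambda_{k}/\mu)^{q}\Phi^{*}(\xi_{k}/\lambda_{k})$; summing and using $\EB[\Phi^{*}(\xi_{k}/\lambda_{k})]\leq 1$ bounds the contribution by $\sum_{k}(\lambda_{k}/\mu)^{q}=1$. On the complementary event, $\Phi^{*}(\xi_{k}/\mu)\leq\Phi^{*}(y_{0})$, while disjointness of supports forces $\sum_{k}\PB(\{\xi_{k}\neq 0\})\leq 1$, so this part contributes at most $\Phi^{*}(y_{0})$. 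Altogether $\EB[\Phi^{*}((\sum_{k}\xi_{k})/\mu)]\leq K:=1+\Phi^{*}(y_{0})$; the convexity bound $\Phi^{*}(x/K)\leq K^{-1}\Phi^{*}(x)$ (valid for $K\geq 1$) then yields $\|\sum_{k}\xi_{k}\|_{\Phi^{*}}\leq K\mu$, and the equivalence $\|\cdot\|_{(\Phi^{*})}\leq 2\|\cdot\|_{\Phi^{*}}$ produces the claim with $C_{q,\Phi^{*}}:=2K$.

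\textbf{Main obstacle.} The technical heart is the first step: rigorously converting the hypothesis $q<q_{\Phi}$, phrased through the growth index of $\Phi$, into the pointwise scaling property of $\Phi^{*}$. Once that is secured, the remainder is the clean two-region split above, with disjointness being the only real mechanism at work.
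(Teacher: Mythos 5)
Your proof is correct, but it takes a genuinely different route from the paper's. The paper never dualizes the index condition: it stays on the $\Phi$ side, replacing $\Phi$ by the Young function $\Psi$ that is linear on $[0,x_0]$ and equals $\Phi$ beyond $x_0$, so that $L_\Psi=L_\Phi$ with equivalent norms and the scaling $\Psi(\lambda x)\le\lambda^p\Psi(x)$ holds for \emph{all} $x>0$, $\lambda\ge 1$; this yields $\|\eta\|_\Psi\le\EB[\Psi(\eta)]^{1/p}$ on $\BB_\Psi$, and (\ref{eq:UpperPEstim}) then follows by pairing $\sum_{k}\xi_k$ against $\eta\in\BB_\Psi$ and applying H\"older's inequality for sums with exponents $q,p$, using $\sum_k\EB[\Psi(\eta)\ind_{A_k}]\le 1$. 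You instead transfer the hypothesis $q<q_\Phi$ into a scaling property of $\Phi^*$ at infinity and run a purely modular (Luxemburg-norm) computation; your two-region split at $y_0$, where disjointness gives $\sum_k\PB(\xi_k\ne 0)\le 1$, plays exactly the role of the paper's modification of $\Phi$ near the origin --- both devices localize an index condition that only holds near infinity, which is all one can have on a probability space. Your core estimate is sound: on $\{|\xi_k|\ge\mu y_0\}$ the scaling applies legitimately with $a=\lambda_k/\mu\le 1$, the modular bound $\EB[\Phi^*(\xi_k/\lambda_k)]\le 1$ is valid for the Luxemburg norm, and the passage from $\bigl\|\sum_k\xi_k\bigr\|_{\Phi^*}\le(1+\Phi^*(y_0))\mu$ to $\|\cdot\|_{(\Phi^*)}$ via the stated norm equivalence is fine, giving the explicit constant $2(1+\Phi^*(y_0))$.

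The one step to tighten is the one you yourself flag as the technical heart. Deriving the scaling of $\Phi^*$ through $\liminf_{y}y(\Phi^*)'(y)/\Phi^*(y)=q_\Phi$ requires care at jumps and flat pieces of $\Phi'$, where the pointwise identity between the two logarithmic derivatives (valid only at points $z=\Phi'(y)$) is unavailable. A clean patch that avoids all derivative technicalities: choose $x_0$ with $p_\Phi(x_0)<p:=q/(q-1)$ and $\Phi(x_0)>0$, and get $\Phi(\lambda x)\le\lambda^p\Phi(x)$ for $x\ge x_0$, $\lambda\ge 1$, exactly as in the paper's (\ref{eq:Delta2Alt2}); then for $z>\Phi'(x_0)$ every maximizer $x_z$ of $x\mapsto zx-\Phi(x)$ satisfies $x_z\ge x_0$, and evaluating the supremum defining $\Phi^*(\lambda z)$ at the point $\lambda^{q-1}x_z$ gives, for $\lambda\ge 1$,
\begin{equation*}
  \Phi^*(\lambda z)\;\ge\;\lambda^{q}zx_z-\Phi(\lambda^{q-1}x_z)\;\ge\;\lambda^{q}\bigl(zx_z-\Phi(x_z)\bigr)\;=\;\lambda^{q}\Phi^*(z),
\end{equation*}
using $(q-1)p=q$; this is precisely your scaling $\Phi^*(ay)\le a^q\Phi^*(y)$ for $a\in(0,1]$, $ay\ge y_0$. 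With that patch the proof is complete. In terms of trade-offs: the paper's only analytic input is the integration of the logarithmic derivative of $\Phi$, at the price of going through the dual pairing and H\"older's inequality; your route needs the conjugate-index duality, but once granted, the disjointness mechanism appears in its most elementary, self-contained modular form.
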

\begin{proof}
  The case $q=1$ is trivial (we can take $C_{q,\Phi^*}=1$), and note
  that $1<q<\frac{p_\Phi}{p_\Phi-1}$ if and only if $q=\frac{p}{p-1}$
  for some $p\in (p_\Phi(x_0),\infty)$ and $x_0>0$. Fix such $q, p$
  and $x_0$. Then
  $\Psi(x):=\frac{\Phi(x_0)}{x_0}x\ind_{[0,x_0]}(x)+\Phi(x)\ind_{(x_0,\infty)}(x)$
  is a $\Delta_2$-Young function with $\Psi(x)=\Phi(x)$ for
  $x\geq x_0$, thus $L_\Psi=L_\Phi$ with equivalent norms (since
  $(\Omega,\FC,\PB)$ is a probability space, see \citep{MR1113700},
  Th.~V.1.3); hence there exists a $C>0$ such that
  \begin{equation}
    \label{eq:UpperQEstimProof1}
    C^{-1}\|\cdot\|_{(\Psi^*)}\leq \|\cdot\|_{(\Phi^*)}\leq
  C\|\cdot\|_{(\Psi^*)}.
  \end{equation}
  Moreover, $\Psi(x)>0$ for $x>0$ and
  $p_\Psi(0)=1\vee p_\Phi(x_0)=p_\Phi(x_0)<p<\infty$; in particular,
  for any $\lambda\geq 1$ and $x>0$,
  $\log\frac{\Psi(\lambda x)}{\Psi(x)}=\int_1^\lambda
  \frac{tx\Psi'(tx)}{\Psi(tx)}\frac{dt}{t}\leq p\log \lambda$, hence 
  \begin{equation}
    \label{eq:Delta2Alt2}
    \Psi(\lambda x)\leq \lambda^p\Psi(x)\text{ for }x>0,\,\lambda\geq 1.
  \end{equation}
  Therefore
  $1=\EB[\Psi(\eta/\|\eta\|_\Psi)]\leq
  \|\eta\|_\Psi^{-p}\EB[\Psi(\eta)]$
  for $0<\|\eta\|_\Psi\leq 1$, where the first equality is another
  consequence of $\Phi\in\Delta_2$. Hence we have
  \begin{equation}
    \label{eq:ProofKomlos1}
    \|\eta\|_\Psi\leq \EB[\Psi(\eta)]^{1/p}\text{ for all }\eta\in\BB_\Psi.
  \end{equation}

  Now if $\xi_k=\xi_k\ind_{A_k}\in L_{\Phi^*}=L_{\Psi^*}$ with
  $A_k\in\FC$ disjoint, then for any $\eta\in \BB_\Psi$,
  \begin{align*}
    \EB\Biggl[\Biggl(\sum_{k\leq n}\xi_k\Biggr)\eta\Biggr]
    &\leq\sum_{k\leq n}\|\xi_k\|_{(\Psi^*)}\|\eta\ind_{A_k}\|_\Psi
      \stackrel{\text{(\ref{eq:ProofKomlos1})}}\leq \sum_{k\leq n}\|\xi_k\|_{(\Psi^*)}\EB[\Psi(\eta)\ind_{A_k}]^{1/p}\\
    &\leq\Biggl(\sum_{k\leq n}\|\xi_k\|_{(\Psi^*)}^{q}\Biggr)^{1/q}\Biggl(\sum_{k\leq n}\EB[\Psi(\eta)\ind_{A_k}]\Biggr)^{1/p}
      \leq \Biggl(\sum_{k\leq n}\|\xi_k\|_{(\Psi^*)}^{q}\Biggr)^{1/q},
  \end{align*}
  since
  $\sum_{k\leq n}\EB[\Psi(\eta)\ind_{A_k}] \leq\EB[\Psi(\eta)]\leq 1$.
  Taking the supremum over $\eta\in\BB_\Psi$,
  \begin{align*}
    \frac1C    \Biggl\|\sum_{k\leq n}\xi_k\Biggr\|_{(\Phi^*)}
    \stackrel{\text{(\ref{eq:UpperQEstimProof1})}}\leq
    \Biggl\|\sum_{k\leq n}\xi_k\Biggr\|_{(\Psi^*)}
    \leq \Biggl(\sum_{k\leq n}\|\xi_k\|_{(\Psi^*)}^{q}\Biggr)^{\frac1q}
    \stackrel{\text{(\ref{eq:UpperQEstimProof1})}}\leq
    C\Biggl(\sum_{k\leq n}\|\xi_k\|_{(\Phi^*)}^{q}\Biggr)^{\frac1q}.
  \end{align*}
\end{proof}

\begin{corollary}
  \label{cor:DisjointCesaro}
  If $(\xi_n)_n$ is a norm bounded disjointly supported sequence in $L_{\Phi^*}$,
  then
  \begin{align*}
    \sup_n\left|\frac{\xi_1+\cdots+\xi_n}n\right|\in L_{\Phi^*}\quad\text{and}\quad
    \left\|\frac{\xi_1+\cdots+\xi_n}n\right\|_{\Phi^*}\rightarrow 0.
  \end{align*}
\end{corollary}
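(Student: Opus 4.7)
The plan is to exploit the upper $q$-estimate from Lemma~\ref{lem:PUpperEstim}. Since $\Phi\in\Delta_2$ we have $p_\Phi<\infty$, hence $q_\Phi=p_\Phi/(p_\Phi-1)>1$, so I can fix some $q\in(1,q_\Phi)$ and invoke \eqref{eq:UpperPEstim} with the constant $C_{q,\Phi^*}$. Write $M:=\sup_n\|\xi_n\|_{(\Phi^*)}<\infty$ and $S_n:=\xi_1+\cdots+\xi_n$.

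For the norm convergence, disjoint support makes $S_n$ a sum of disjointly supported elements, so the upper $q$-estimate gives
\begin{equation*}
  \|S_n\|_{(\Phi^*)}\leq C_{q,\Phi^*}\Bigl(\sum_{k\leq n}\|\xi_k\|_{(\Phi^*)}^q\Bigr)^{1/q}\leq C_{q,\Phi^*}Mn^{1/q},
\end{equation*}
and dividing by $n$ yields $\|S_n/n\|_{(\Phi^*)}=O(n^{1/q-1})\to 0$ since $q>1$; passing to the equivalent norm $\|\cdot\|_{\Phi^*}$ gives the second assertion.

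The first assertion is the more substantive one, and the main insight is a pointwise identification of the maximal function. Because the $\xi_k=\xi_k\ind_{A_k}$ have pairwise disjoint supports $A_k$, at a.e.\ $\omega$ at most one $\xi_k(\omega)$ is nonzero: if $\omega\in A_{k_0}$, then $S_n(\omega)=0$ for $n<k_0$ and $S_n(\omega)=\xi_{k_0}(\omega)$ for $n\geq k_0$, so $\sup_{n\geq 1}|S_n(\omega)|/n=|\xi_{k_0}(\omega)|/k_0$, and $\sup_n|S_n|/n=0$ off $\bigcup_k A_k$. Therefore, on the whole probability space,
\begin{equation*}
  \sup_{n\geq 1}\Bigl|\tfrac{S_n}{n}\Bigr|=\sum_{k\geq 1}\tfrac{|\xi_k|}{k}\quad\text{a.s.,}
\end{equation*}
where the right-hand side is again a disjointly supported sum.

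Then I apply the upper $q$-estimate a second time, now to $|\xi_k|/k$:
\begin{equation*}
  \Bigl\|\sum_{k\leq N}\tfrac{|\xi_k|}{k}\Bigr\|_{(\Phi^*)}\leq C_{q,\Phi^*}\Bigl(\sum_{k\leq N}\tfrac{\|\xi_k\|_{(\Phi^*)}^q}{k^q}\Bigr)^{1/q}\leq C_{q,\Phi^*}M\Bigl(\sum_{k=1}^\infty k^{-q}\Bigr)^{1/q}<\infty,
\end{equation*}
uniformly in $N$, because $q>1$. Letting $N\to\infty$ and using Fatou's lemma in $L_{\Phi^*}$ (or equivalently monotone convergence for the norm, which holds as $\Phi^*$ is a Young function and the partial sums increase), the limit $\sum_{k\geq 1}|\xi_k|/k$ lies in $L_{\Phi^*}$, which is exactly $\sup_n|S_n/n|\in L_{\Phi^*}$. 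The one step that requires care is the pointwise identification of $\sup_n|S_n/n|$; once that is in hand the result is an immediate double application of Lemma~\ref{lem:PUpperEstim}, each time converting disjoint support and a convergent $p$-series into an $L_{\Phi^*}$-bound.
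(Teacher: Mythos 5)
Your proof is correct and takes essentially the same route as the paper's: both deduce the decay $\bigl\|\tfrac1n(\xi_1+\cdots+\xi_n)\bigr\|_{(\Phi^*)}=O(n^{1/q-1})$ from Lemma~\ref{lem:PUpperEstim}, identify $\sup_n\bigl|\tfrac1n(\xi_1+\cdots+\xi_n)\bigr|$ pointwise with the disjointly supported sum $\sum_{k}\tfrac1k|\xi_k|$, and then bound its partial sums uniformly by a second application of the upper $q$-estimate combined with the monotone-convergence (Fatou) property of the Orlicz norm. There are no gaps.
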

\begin{proof}
  Let $\xi_n=\xi_n\ind_{A_n}$ with $A_n\in\FC$ disjoint,
  $a:=\sup_n\|\xi_n\|_{(\Phi^*)}<\infty$, $1<q<q_\Phi$, and
  $C=C_{q,\Phi^*}$ as in Lemma~\ref{lem:PUpperEstim}. Put
  $\bar\xi_n:=\frac{\xi_1+\cdots+\xi_n}n$.  Then
  $\|\bar\xi_n\|_{(\Phi^*)}\leq a C\left(nn^{-q}\right)^{1/q}=aC
  n^{\frac1q-1}\rightarrow 0$.  Next, observe that
  $\|\sup_n|\bar\xi_n|\|_{(\Phi^*)}=\sup_N\|\sup_{n\leq
    N}|\bar\xi_n|\|_{(\Phi^*)}$ and
  \begin{align*}
    \sup_{n\leq N}|\bar\xi_n|=\sum_{k\leq N}\left(\sup_{n\leq N}|\bar\xi_n|\right)\ind_{A_k}
    =\sum_{k\leq N}\left(\sup_{k\leq n\leq N}\frac1n|\xi_k|\right)\ind_{A_k}=\sum_{k\leq N}\frac1k|\xi_k|,
  \end{align*}
  while $\left\|\sum_{k\leq N}\frac1k\xi_n\right\|_{(\Phi^*)} \leq a
  C\left(\sum_{k\leq N}\frac1{k^q}\right)^{1/q} \leq a
  C\left(\sum_{k=1}^\infty\frac1{k^q}\right)^{1/q}<\infty$, so
  $\sup_n|\bar\xi_n|\in L_{\Phi^*}$.
\end{proof}

Noting that
$\bar\xi_n=\frac{\xi_{n+1}+\cdots+\xi_{2n}}{n}=2\frac{\xi_1+\cdots+\xi_{2n}}{2n}-\frac{\xi_1+\cdots+\xi_n}n\in
\conv(\xi_n,\xi_{n+1},...)$, we get:
\begin{corollary}
  \label{cor:DisjConvConv1}
  Any norm bounded disjoint sequence $(\xi_n)_n$ in $L_{\Phi^*}$ has
  an order bounded and norm null sequence of forward convex
  combinations $\bar\xi_n\in \conv(\xi_k; k\geq n)$.
\end{corollary}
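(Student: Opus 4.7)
The plan is to use the explicit second-order Cesàro trick already signalled just before the statement, and reduce everything to Corollary~\ref{cor:DisjointCesaro}, which has already done all the hard work of invoking the upper $q$-estimate for disjoint sequences.

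First I would set $S_n := \xi_1+\cdots+\xi_n$ and define
\[
  \bar\xi_n := \frac{\xi_{n+1}+\cdots+\xi_{2n}}{n} = \frac{1}{n}\sum_{k=n+1}^{2n}\xi_k,
\]
so that $\bar\xi_n$ is manifestly a convex combination of $\xi_{n+1},\ldots,\xi_{2n}$, giving $\bar\xi_n\in\conv(\xi_k;k\geq n)$. The only algebraic point to record is the identity
\[
  \bar\xi_n = 2\cdot\frac{S_{2n}}{2n}-\frac{S_n}{n},
\]
which writes $\bar\xi_n$ as an affine combination of two Cesàro means of the original disjoint sequence.

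Next, since $(\xi_n)_n$ is norm bounded and disjointly supported, Corollary~\ref{cor:DisjointCesaro} applies directly to it and yields both $\|S_n/n\|_{\Phi^*}\to 0$ and $\sup_n|S_n/n|\in L_{\Phi^*}$. The triangle inequality then delivers norm nullity,
\[
  \|\bar\xi_n\|_{\Phi^*}\leq 2\|S_{2n}/(2n)\|_{\Phi^*}+\|S_n/n\|_{\Phi^*}\longrightarrow 0,
\]
while the pointwise almost sure estimate $|\bar\xi_n|\leq 2\sup_m|S_m/m|+\sup_m|S_m/m|=3\sup_m|S_m/m|$ together with $\sup_m|S_m/m|\in L_{\Phi^*}$ gives order boundedness of $(\bar\xi_n)_n$ by the common envelope $3\sup_m|S_m/m|$.

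There is no genuine obstacle here: the statement is essentially a formal consequence of Corollary~\ref{cor:DisjointCesaro} via the displayed identity. The only conceptual point worth highlighting is that although $\bar\xi_n$ is expressed as a difference of two Cesàro means (and hence not obviously a convex combination), the leading coefficient $2$ on $S_{2n}/(2n)$ is precisely what makes the cancellation of the first $n$ terms exact, so that $\bar\xi_n$ is in fact the uniform average of $\xi_{n+1},\ldots,\xi_{2n}$ and thus lies in the forward convex hull $\conv(\xi_k;k\geq n)$.
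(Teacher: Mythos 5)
Your proof is correct and is essentially identical to the paper's: the paper derives this corollary from Corollary~\ref{cor:DisjointCesaro} via exactly the identity $\bar\xi_n=\frac{\xi_{n+1}+\cdots+\xi_{2n}}{n}=2\,\frac{\xi_1+\cdots+\xi_{2n}}{2n}-\frac{\xi_1+\cdots+\xi_n}{n}$, stated in the sentence immediately preceding the corollary. The only thing you add is the explicit bookkeeping (the triangle inequality for norm nullity and the envelope $3\sup_m\bigl|\frac{\xi_1+\cdots+\xi_m}{m}\bigr|$ for order boundedness), which the paper leaves implicit.
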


Since any subsequence of norm bounded disjoint sequence is again
bounded and disjoint, the same conclusion holds for any subsequence;
thus
\begin{corollary}
  \label{cor:DisjWeaklyNull}
  Any norm bounded disjoint sequence in $L_{\Phi^*}$ is
  $\sigma(L_{\Phi^*},L_{\Phi^*}')$-null.
\end{corollary}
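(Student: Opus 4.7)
The plan is to use Corollary \ref{cor:DisjConvConv1} via a subsequence argument to get a contradiction with any putative failure of weak null convergence. Fix $\ell\in L_{\Phi^*}'$ and suppose $\ell(\xi_n)\not\to 0$. Then there exist $\varepsilon>0$ and a subsequence $(\xi_{n_k})_k$ with $|\ell(\xi_{n_k})|\geq \varepsilon$ for all $k$, and by a further extraction we may assume $\ell(\xi_{n_k})\to c$ for some $c\in\RB$ with $|c|\geq \varepsilon$.

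The sequence $(\xi_{n_k})_k$ is again norm bounded and disjointly supported, so Corollary \ref{cor:DisjConvConv1} applies to it: there exist forward convex combinations $\bar\xi_k=\sum_{j\geq k}\lambda_j^k \xi_{n_j}\in\conv(\xi_{n_j};j\geq k)$ (with finitely many nonzero $\lambda_j^k\geq 0$ summing to $1$) such that $\|\bar\xi_k\|_{\Phi^*}\to 0$.

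Since $\ell$ is norm continuous on $L_{\Phi^*}$, this gives $\ell(\bar\xi_k)\to 0$. On the other hand, from $\ell(\xi_{n_j})\to c$ and the fact that $\lambda_j^k=0$ for $j<k$,
\begin{align*}
  |\ell(\bar\xi_k)-c|=\Biggl|\sum_{j\geq k}\lambda_j^k(\ell(\xi_{n_j})-c)\Biggr|\leq \sup_{j\geq k}|\ell(\xi_{n_j})-c|\xrightarrow[k\to\infty]{}0,
\end{align*}
so $\ell(\bar\xi_k)\to c$. Combining the two limits forces $c=0$, contradicting $|c|\geq \varepsilon>0$. Hence $\ell(\xi_n)\to 0$ for every $\ell\in L_{\Phi^*}'$, i.e. $\xi_n\to 0$ in $\sigma(L_{\Phi^*},L_{\Phi^*}')$.

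The only real substance here is the observation already flagged before the statement (disjoint subsequences remain disjoint), which allows Corollary \ref{cor:DisjConvConv1} to be invoked \emph{after} the subsequence extraction; everything else is a clean contradiction leveraging norm continuity of $\ell$ against the constancy of $\ell$ on asymptotic convex combinations. No obstacle of note is anticipated.
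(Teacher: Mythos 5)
Your proof is correct and follows essentially the same route as the paper: the paper's (terse) argument is precisely that subsequences of a norm bounded disjoint sequence are again norm bounded and disjoint, so Corollary~\ref{cor:DisjConvConv1} supplies norm-null forward convex combinations along any subsequence, which is incompatible with a functional staying bounded away from zero. You have merely written out the standard subsequence--extraction details that the paper leaves implicit, and every step (Bolzano--Weierstrass extraction, norm continuity of $\ell$, the tail convex-combination estimate) is sound.
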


\begin{remark}
  \label{rem:Alternative}
  The last two corollaries could be derived also from the fact that
  the dual of a Banach lattice $E$ has order continuous norm iff every
  norm bounded disjoint sequence in $E$ is weakly null
  (\cite[Th.~116.1]{MR704021} or \citep[Th.~2.4.14]{MR1128093}). In
  $L_{\Phi^*}$, $(\xi_n)_n$ is disjoint in the lattice sense iff it is
  disjointly supported, while $L_{\Phi^*}'=L_\Phi\oplus
  L_\infty^\medcirc$, where $L_\infty^\medcirc$ is the polar of
  $L_\infty\subset L_{\Phi^*}$ in $L_{\Phi^*}'$.
  The projections of $L_{\Phi^*}'$ onto $L_\Phi$ and onto
  $L_\infty^\medcirc$ are order continuous
  (e.g. \citep{MR0123907}). But $L_\infty^\medcirc$ is an AL space,
  hence has order continuous norm (regardless of $\Delta_2$;
  e.g. \citep[Th.~133.6]{MR704021}), thus $\Phi\in\Delta_2$ implies
  that ($\|\cdot\|_{(\Phi)}=\|\cdot\|_{L_{\Phi^*}'}|_{L_\Phi}$, hence)
  $\|\cdot\|_{L_{\Phi^*}'}$ is order continuous,
  so bounded disjoint sequences are weakly null.
\end{remark}

Now we can state the basic version of our Komlós type result.
\begin{theorem}
  \label{thm:Cesaro1}
  If $(\xi_n)_n$ is a norm bounded sequence in $L_{\Phi^*}$, converging
  in $\PB$ to some $\xi\in L_{\Phi^*}$, then there exists a
  subsequence $(\xi_{n_k})_k$ such that for any further subsequence
  $(\xi_{n_{k(i)}})_i$, the Cesàro means
  $\frac1N\sum_{k\leq N}\xi_{n_{k(i)}}$ converge in order to $\xi$,
  i.e.
  \begin{align}
    \label{eq:CesaroOrderConv1}
    \sup_N\Biggl|\frac1{N}\sum_{i\leq N}\xi_{n_{k(i)}}\Biggr|\in L_{\Phi^*}\text{ and }
    \frac1{N}\sum_{i\leq N}\xi_{n_{k(i)}}\stackrel{N}\rightarrow \xi\text{ a.s.}
  \end{align}
\end{theorem}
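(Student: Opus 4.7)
The plan is first to reduce to the case $\xi=0$ by subtracting $\xi$ from every $\xi_n$ (the hypothesis and conclusion are invariant under translation by an element of $L_{\Phi^*}$), and then, by an initial diagonal extraction, to assume moreover that $\xi_n\to 0$ $\PB$-a.s. After this double reduction, the a.s. half of \eqref{eq:CesaroOrderConv1} becomes automatic for \emph{every} further subsubsequence $(\xi_{n_{k(i)}})_i$: one still has $\xi_{n_{k(i)}}(\omega)\to 0$ on a common full-measure set, and then $\frac{1}{N}\sum_{i\leq N}\xi_{n_{k(i)}}(\omega)\to 0$ pointwise by the classical Cesàro lemma. The substance of the theorem is therefore to ensure the \emph{order boundedness} $\sup_N|\frac{1}{N}\sum_{i\leq N}\xi_{n_{k(i)}}|\in L_{\Phi^*}$ uniformly in the choice of further extraction.

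For this, I would carry out a truncation/disjointification extraction. Fix $c>0$ and split $\xi_n=u_n+v_n$ with $u_n:=\xi_n\ind_{\{|\xi_n|\leq c\}}$ and $v_n:=\xi_n\ind_{\{|\xi_n|>c\}}$. The bounded part takes care of itself, since $|u_n|\leq c$ forces $\sup_N|\frac{1}{N}\sum_i u_{n_{k(i)}}|\leq c\in L_\infty\subset L_{\Phi^*}$ for every subsubsequence, with no extraction needed. The large parts live on $D_n=\{|\xi_n|>c\}$, whose probabilities tend to $0$; by an inductive extraction I can make $\PB(D_{n_k})$ decay as fast as required and then disjointify via $E_k:=D_{n_k}\setminus\bigcup_{j>k}D_{n_j}$, writing $v_{n_k}=\tilde v_k+w_k$ with $\tilde v_k:=v_{n_k}\ind_{E_k}$ (disjointly supported, norm bounded) and $w_k:=v_{n_k}\ind_{D_{n_k}\setminus E_k}$. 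Corollary~\ref{cor:DisjointCesaro} applied to any further subsubsequence $(\tilde v_{k(i)})_i$ immediately yields $\sup_N|\frac{1}{N}\sum_i\tilde v_{k(i)}|\in L_{\Phi^*}$.

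The delicate point, and what I expect to be the main obstacle, is controlling the collision residual $w_k$: its support has arbitrarily small (even summable) measure, but its $(\Phi^*)$-norm is not controlled by the measure of its support in general, because $\|\cdot\|_{(\Phi^*)}$ fails to be order continuous unless $\Phi^*\in\Delta_2$. My approach is to exploit the disjoint decomposition $D_{n_k}\setminus E_k=\bigsqcup_{j>k}(D_{n_k}\cap E_j)$, which lets me reorganise $\sum_k w_k=\sum_j\bigl(\sum_{k<j}v_{n_k}\bigr)\ind_{E_j}$; since the $E_j$ are pairwise disjoint, the upper $q$-estimate of Lemma~\ref{lem:PUpperEstim} bounds the $(\Phi^*)$-norm of such a sum by a $q$-type quantity. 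Combined with a sufficiently aggressive inductive choice of $n_k$ — making each $\PB(D_{n_k})$ small enough that $\|\ind_{\bigcup_{j>k}D_{n_j}}\|_{(\Phi^*)}$ (which does vanish with the measure) absorbs the bounded-truncation contributions of the $v_{n_k}$, while the large-value residuals are controlled through the Markov-type estimate $\PB(|\xi_n|>t)\leq 1/\Phi^*(t/M)$ — the plan is to force $\sum_k|w_k|\in L_{\Phi^*}$. This element then dominates every Cesàro mean of the $w_{k(i)}$; summing the three pieces $u$, $\tilde v$, $w$ yields the required order boundedness, and together with the automatic a.s. convergence this establishes \eqref{eq:CesaroOrderConv1}.
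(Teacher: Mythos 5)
Your reductions and the easy pieces are sound: translating to $\xi=0$, passing to an a.s.\ convergent subsequence so that the a.s.\ half of (\ref{eq:CesaroOrderConv1}) becomes automatic for every further extraction, the trivial bound for the truncated parts $u_n$, and Corollary~\ref{cor:DisjointCesaro} for the disjointified parts $\tilde v_k$. The gap is exactly where you suspect it, and it is fatal to the scheme: the collision residuals $w_k$ cannot be controlled by making $\PB(D_{n_j})$, $j>k$, small. At stage $j$ of your induction the functions $v_{n_1},\dots,v_{n_{j-1}}$ are \emph{fixed}, and your $q$-estimate needs $\|v_{n_k}\ind_{E_j}\|_{(\Phi^*)}\leq\|v_{n_k}\ind_{D_{n_j}}\|_{(\Phi^*)}$ to be summably small in $j$. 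But only $\Phi\in\Delta_2$ is assumed; when $\Phi^*\notin\Delta_2$ a fixed element of $L_{\Phi^*}$ need not have absolutely continuous norm, i.e.\ $\|v\ind_B\|_{(\Phi^*)}$ need \emph{not} tend to $0$ as $\PB(B)\to0$. Your truncation does not repair this: the part below $t_k$ is indeed absorbed by $t_k\|\ind_{D_{n_j}}\|_{(\Phi^*)}\to0$, but the large-value residual $v_{n_k}\ind_{\{|v_{n_k}|>t_k\}}$ can have $(\Phi^*)$-norm bounded below uniformly in $t_k$ (this is precisely the failure of order continuity of the norm), and the Markov bound $\PB(|\xi_n|>t)\leq 1/\Phi^*(t/M)$ controls measures of sets, which is exactly the quantity that cannot be converted into norm smallness.

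Concretely, take $v\geq0$ with $\EB[\Phi^*(v)]<\infty$ but $\EB[\Phi^*(\lambda v)]=\infty$ for every $\lambda>1$ (such $v$ exists in the atomless case whenever $\Phi^*\notin\Delta_2$); then $\|v\ind_{\{v>m\}}\|_{(\Phi^*)}\geq1$ for all $m$. Put $\xi_n:=v\ind_{\{v>a_n\}}$ with $a_n\uparrow\infty$: this sequence is norm bounded and null in $\PB$, $D_n=\{v>a_n\vee c\}$, the sets $E_j$ become the annuli $\{a_{n_j}\vee c<v\leq a_{n_{j+1}}\vee c\}$, and on $E_j$ every $v_{n_k}$ with $k<j$ equals $v$; hence $\sum_k|w_k|=\sum_j(j-1)v\ind_{E_j}$, which lies outside $L_{\Phi^*}$ for \emph{every} choice of subsequence (since $\EB[\Phi^*((1+\varepsilon)v)\ind_{\{v>m\}}]=\infty$ for all $m$), even though the theorem is trivial for this sequence because $\sup_k|\xi_{n_k}|=v\ind_{\{v>a_{n_1}\}}\in L_{\Phi^*}$. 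So the object you aim to construct simply does not exist in general; your scheme only closes when $\Phi^*\in\Delta_2$, i.e.\ in the reflexive case, which misses the content of the theorem. The paper's proof avoids hand-made disjointification from the level sets $\{|\xi_n|>c\}$: it applies the Kadec--Pe\l{}czy\'nski subsequence splitting to $(\Phi^*(\xi_n))_n$ in $L_1$, producing disjoint sets $A_n$ for which the complementary parts satisfy the much stronger property that $(\Phi^*(\zeta_n\ind_{A_n^c}))_n$ is \emph{uniformly integrable}; Lemma~\ref{lem:OrderBoundedSubseq} then dominates the whole non-disjoint part at once by $\sup_k|\zeta^r_{n_k}|\in L_{\Phi^*}$ (via $\sum_k\EB[\Phi^*(\zeta^r_{n_k})]<\infty$ along a further subsequence), and Corollary~\ref{cor:DisjointCesaro} handles the disjoint part as in your proposal. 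Uniform integrability of the composed sequence $\Phi^*(\cdot)$ is the ingredient your truncation cannot manufacture.
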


Here the original bounded sequence $(\xi_n)_n$ is supposed to converge
in $\PB$, which is needed to ensure that the Cesàro means themselves
of any subsequence converge in order. Without this a priori
assumption, we still have a slightly weaker conclusion.

\begin{theorem}
  \label{thm:Cesaro2}
  Any norm bounded sequence $(\xi_n)_n$ in $L_{\Phi^*}$ admits a
  subsequence $(\xi_{n_k})_k$ as well as $\xi\in L_{\Phi^*}$ such that
  for any subsequence $(\xi_{n_{k(i)}})_i$, the sequence of Cesàro
  means $\frac1N\sum_{k\leq N}\xi_{n_{k(i)}}$ has a subsequence order
  convergent to $\xi$, i.e. there is a sequence $(N_l)_l$ with
  $\sup_l\left|\frac1{N_l}\sum_{i\leq N_l}\xi_{n_{k(i)}}\right|\in
  L_{\Phi^*}$
  and
  $\frac1{N_l}\sum_{i\leq N_l}\xi_{n_{k(i)}} \rightarrow \xi\text{
    a.s.}$
\end{theorem}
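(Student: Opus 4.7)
The plan is to reduce Theorem~\ref{thm:Cesaro2} to Theorem~\ref{thm:Cesaro1} via the classical Komlós theorem. Since $\mathbf{1}\in L_\Phi$, one has the continuous inclusion $L_{\Phi^*}\hookrightarrow L_1$, so the norm bounded sequence $(\xi_n)_n$ is bounded in $L_1$; the classical Komlós theorem then produces a subsequence $(n_k)_k$ and some $\xi\in L_1$ such that for \emph{every} further subsequence $(n_{k(i)})_i$, the Cesàro means
\[
\eta_N \;:=\; \frac{1}{N}\sum_{i\leq N}\xi_{n_{k(i)}}
\]
converge a.s.\ to $\xi$.

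Next I would upgrade $\xi\in L_1$ to $\xi\in L_{\Phi^*}$. With $M:=\sup_n\|\xi_n\|_{(\Phi^*)}<\infty$, the triangle inequality gives $\|\eta_N\|_{(\Phi^*)}\leq M$, equivalently $\EB[\Phi^*(|\eta_N|/M)]\leq 1$; combining this with $|\eta_N|\to|\xi|$ a.s.\ and the nonnegativity of $\Phi^*$, Fatou's lemma yields $\EB[\Phi^*(|\xi|/M)]\leq 1$, whence $\|\xi\|_{(\Phi^*)}\leq M$.

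With $\xi\in L_{\Phi^*}$ in hand, fix any further subsequence $(n_{k(i)})_i$. The Cesàro means $(\eta_N)_N$ then form a norm bounded sequence in $L_{\Phi^*}$ converging a.s.\ (hence in probability) to $\xi\in L_{\Phi^*}$, so $(\eta_N)_N$ satisfies the hypotheses of Theorem~\ref{thm:Cesaro1}. Applying that theorem to $(\eta_N)_N$ supplies the desired indices $(N_l)_l$ along which the asserted order convergence $\sup_l|\eta_{N_l}|\in L_{\Phi^*}$ and $\eta_{N_l}\to\xi$ a.s.\ holds.

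The main obstacle is the upgrade $\xi\in L_1\Rightarrow\xi\in L_{\Phi^*}$: the classical Komlós theorem only locates $\xi$ in $L_1$, and trapping it in $L_{\Phi^*}$ requires a genuine use of the norm bound in $L_{\Phi^*}$ via Fatou's lemma applied to $\Phi^*(|\eta_N|/M)$. Once this is done, the reduction to Theorem~\ref{thm:Cesaro1} is automatic, since the Cesàro means inherit both the norm bound (triangle inequality) and the in-probability convergence (from the a.s.\ Komlós conclusion), which is precisely what allows the sharper Theorem~\ref{thm:Cesaro1} to take over and deliver the order-convergent subsequence of Cesàro means.
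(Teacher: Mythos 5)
Your opening steps coincide with the paper's: Komlós's theorem gives the subsequence and the a.s.\ limit $\xi$ of all further-subsequence Cesàro means, and Fatou's lemma upgrades $\xi$ from $L_1$ to $L_{\Phi^*}$. (Minor point: the identity $\|\eta\|\leq M\Leftrightarrow\EB[\Phi^*(\eta/M)]\leq 1$ is the one for the Luxemburg norm $\|\cdot\|_{\Phi^*}$ rather than the dual norm $\|\cdot\|_{(\Phi^*)}$; since the two are equivalent this is harmless.) The last step, however, has a genuine gap: Theorem~\ref{thm:Cesaro1} applied to $(\eta_N)_N$ does \emph{not} produce a subsequence $(N_l)_l$ with $\sup_l|\eta_{N_l}|\in L_{\Phi^*}$ and $\eta_{N_l}\to\xi$ a.s. Its conclusion is that for a suitable subsequence $(\eta_{N_m})_m$, the \emph{Cesàro means of} $(\eta_{N_m})_m$ (and of its further subsequences) converge in order to $\xi$. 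These averages of averages are forward convex combinations of the $\eta_N$'s; they are neither a subsequence of $(\eta_N)_N$ nor Cesàro means of the original sequence (cf.\ the paper's remark before Corollary~\ref{cor:ConvCombUseful} that Cesàro means of Cesàro means are not Cesàro means). You have silently replaced ``the Cesàro means of the subsequence converge in order'' by ``the subsequence converges in order'', and that is precisely the assertion to be proved.

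Moreover, this gap cannot be repaired while treating $(\eta_N)_N$ as a black-box norm bounded, $\PB$-convergent sequence: the conclusion of Theorem~\ref{thm:Cesaro1} for a sequence does not imply that the sequence has an order bounded subsequence. Take $L_{\Phi^*}=L_2$ and $\zeta_n=\PB(A_n)^{-1/2}\ind_{A_n}$ with $A_n\in\FC$ disjoint, $\PB(A_n)>0$ (Remark~\ref{rem:MackeyBDD1}): this sequence is norm bounded and null in $\PB$, and by Corollary~\ref{cor:DisjointCesaro} the Cesàro means of every subsequence are order bounded and a.s.\ null, so the conclusion of Theorem~\ref{thm:Cesaro1} holds with $\xi=0$; yet $\EB[(\sup_k|\zeta_{n_k}|)^2]=\sum_k 1=\infty$ for every subsequence, so no subsequence is order bounded. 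To extract an order bounded subsequence of the Cesàro means one must exploit their special structure, and this is what the paper's proof does: by the Kadec--Pe\l{}czy\'nski theorem it splits $\zeta_n=\zeta_n^r+\zeta_n^s$ with $(\Phi^*(\zeta^r_n))_n$ uniformly integrable and $(\zeta^s_n)_n$ disjointly supported; then the Cesàro means of the singular parts are order bounded by Corollary~\ref{cor:DisjointCesaro}, the Cesàro means of the regular parts are null in $\PB$ with $(\Phi^*(\bar\zeta^r_N))_N$ uniformly integrable by convexity of $\Phi^*$, and Lemma~\ref{lem:OrderBoundedSubseq} finally yields a subsequence of the Cesàro means themselves that is order bounded.
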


\begin{lemma}[cf. \citep{MR1101668}]
  \label{lem:OrderBoundedSubseq}
  If $\xi_n\rightarrow 0$ in $\PB$ and if $(\Phi^*(\xi_n))_n$ is
  uniformly integrable, there exists a subsequence $(\xi_{n_k})_k$
  such that $\sup_k |\xi_{n_k}|\in L_{\Phi^*}$; in particular $\xi_n\rightarrow 0$ in $\tau(L_{\Phi^*},L_\Phi)$.
\end{lemma}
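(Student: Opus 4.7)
The plan is to reduce the existence of an order-bounded subsequence to a summability estimate, and then to produce the needed sequence via Vitali's convergence theorem. The starting observation is that, being an even Young function which is non-decreasing on $[0,\infty)$, $\Phi^*$ satisfies
\[
\Phi^*(\sup_k|\eta_k|)=\sup_k\Phi^*(\eta_k)\leq \sum_k\Phi^*(\eta_k)
\]
pointwise for any sequence $(\eta_k)_k$ in $L_0$. Taking expectations, it suffices to find a subsequence $(\xi_{n_k})_k$ with $\sum_k\EB[\Phi^*(\xi_{n_k})]<\infty$: the right-hand side of the display then bounds $\Phi^*(\sup_k|\xi_{n_k}|)$ and has finite expectation, so $\sup_k|\xi_{n_k}|\in L_{\Phi^*}$.

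To obtain such a subsequence, I would first show $\EB[\Phi^*(\xi_n)]\to 0$. Continuity of $\Phi^*$ together with $\Phi^*(0)=0$ yields $\Phi^*(\xi_n)\to 0$ in $\PB$ from $\xi_n\to 0$ in $\PB$; combined with the assumed uniform integrability of $(\Phi^*(\xi_n))_n$, Vitali's convergence theorem gives $\Phi^*(\xi_n)\to 0$ in $L_1$. One then picks $n_1<n_2<\cdots$ with $\EB[\Phi^*(\xi_{n_k})]\leq 2^{-k}$, and the previous step delivers $\sup_k|\xi_{n_k}|\in L_{\Phi^*}$.

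For the ``in particular'' statement, both hypotheses---$\xi_n\to 0$ in $\PB$ and $(\Phi^*(\xi_n))_n$ uniformly integrable---pass to every subsequence. Applying the first part of the lemma to an arbitrary subsequence of $(\xi_n)$, one extracts a further subsequence which is order bounded in $L_{\Phi^*}$ and still null in $\PB$; this further subsequence converges to $0$ in $\tau(L_{\Phi^*},L_\Phi)$ by (\ref{eq:MackeyOrderInterval}). Since every subsequence of $(\xi_n)$ admits a further Mackey-null subsequence, $\xi_n\to 0$ for $\tau(L_{\Phi^*},L_\Phi)$ (a standard topological principle not requiring first countability). The argument is essentially routine and I do not foresee a genuine obstacle; the only small point to check is the pointwise identity $\Phi^*(\sup_k|\eta_k|)=\sup_k\Phi^*(\eta_k)$, which rests on $\Phi^*$ being even and monotone on $[0,\infty)$.
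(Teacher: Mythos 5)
Your proof is correct and takes essentially the same route as the paper's: both arguments extract a subsequence with $\sum_k\EB[\Phi^*(\xi_{n_k})]<\infty$ (the paper asserts $\EB[\Phi^*(\xi_n)]\to 0$ without spelling out the Vitali step you make explicit), bound $\EB\bigl[\Phi^*\bigl(\sup_k|\xi_{n_k}|\bigr)\bigr]$ by that sum, and then obtain the Mackey convergence from (\ref{eq:MackeyOrderInterval}) together with the subsequence principle. The only cosmetic difference is that where you use the pointwise identity $\Phi^*(\sup_k|\eta_k|)=\sup_k\Phi^*(\eta_k)$ (valid, as you note, by monotonicity and continuity of $\Phi^*$), the paper instead uses the estimate $\Phi^*(|\eta|\vee|\eta'|)\leq\Phi^*(\eta)+\Phi^*(\eta')$ with induction and monotone convergence.
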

\begin{proof}
  The assumption implies $\EB[\Phi^*(\xi_n)]\rightarrow 0$, so there
  is a subsequence $(\xi_{n_k})_k$ such that
  $\sum_k\EB[\Phi^*(\xi_{n_k})]<\infty$. Noting that
  $\Phi^*(|\eta|\vee |\eta'|)
  =\Phi^*(\eta)\ind_{\{|\eta|>|\eta'|\}}+\Phi^*(\eta')\ind_{\{|\eta|\leq
    |\eta'|\}}\leq \Phi^*(\eta)+\Phi^*(\eta')$,
  a simple induction and the monotone convergence theorem show that
  \begin{align*}
    \EB\biggl[\Phi^*\biggl(\sup_k|\xi_{n_k}|\biggr)\biggr]\leq\lim_m    \EB\biggl[\Phi^*\biggl(\sup_{k\leq m}|\xi_{n_k}|\biggr)\biggr]
    \leq\lim_m \sum_{k\leq m}\EB[\Phi^*(\xi_{n_k})]\leq \sum_{k=1}^\infty \EB[\Phi^*(\xi_{n_k})]<\infty.
  \end{align*}
  Hence $\sup_k|\xi_{n_k}|\in L_{\Phi^*}$.  In particular,
  $\xi_{n_k}\rightarrow 0$ in $\tau(L_{\Phi^*},L_\Phi)$ by
  (\ref{eq:MackeyOrderInterval}). Since the assumptions on $(\xi_n)_n$
  are inherited to any subsequence, we deduce that every subsequence
  has a $\tau(L_{\Phi^*},L_\Phi)$-null subsequence; hence $(\xi_n)_n$
  itself is $\tau(L_{\Phi^*},L_\Phi)$-null.
\end{proof}

\begin{proof}[Proof of Theorems~\ref{thm:Cesaro1} and~\ref{thm:Cesaro2}]
  Let $(\xi_n)_n$ be a norm bounded sequence in $L_{\Phi^*}$, a
  fortiori bounded in $L_1$.  Komlós's theorem yields a subsequence,
  still denoted by $(\xi_n)_n$, and a $\xi\in L_1$ such that the
  Cesàro means of any further subsequence converges a.s. to $\xi$;
  then $\xi\in L_{\Phi^*}$ by Fatou's lemma. We can normalise
  $(\xi_n)_n$ so that $\xi=0$ and $\|\xi_n\|_{\Phi^*}\leq 1$
  ($\Leftrightarrow$
  $\EB[\Phi^*(\xi_n)]\leq 1$). Then the Kadec–Pe\l{}czy\'nski
  theorem (e.g. \citep[Lemma~5.2.8]{MR2192298}) applied to the bounded
  sequence $(\Phi^*(\xi_n))_n$ yields a subsequence $(\zeta_n)_n$ of
  $(\xi_n)$ as well as a disjoint sequence $(A_n)_n$ in $\FC$ such
  that $(\Phi^*(\zeta_n\ind_{A^c_n}))_n$ is uniformly integrable. Let
  $\zeta^r_n:=\zeta_n\ind_{A_n^c}$ and $\zeta^s_n:=\zeta_n\ind_{A_n}$
  so that $\zeta_n=\zeta^r_n+\zeta^s_n$.

  Now if the original sequence $(\xi_n)_n$ converges in $\PB$ (to $0$
  by the reduction above), then $(\zeta_n)_n\subset (\xi_n)_n$ as well
  as $(\zeta^r)_n$ are null in $\PB$. Since $(\Phi^*(\zeta^r_n))_n$ is
  uniformly integrable, Lemma~\ref{lem:OrderBoundedSubseq} yields a
  subsequence $(n_k)_k$ of positive integers such that
  $\eta':=\sup_k|\zeta^r_{n_k}|\in L_{\Phi^*}$. On the other hand,
  $(\zeta^s_n)_n$ (and any of its subsequence) is a norm bounded
  disjoint sequence, hence Corollary~\ref{cor:DisjointCesaro} shows
  that for any subsequence $(k(i))_i$,
  \begin{align*}
    \sup_N\left|\frac1N\sum_{i\leq N}\zeta_{n_{k(i)}}\right|
    \leq \sup_N\left|\frac1N\sum_{i\leq N}\zeta^r_{n_{k(i)}}\right|
    +\sup_N\left|\frac1N\sum_{i\leq N}\zeta^s_{n_{k(i)}}\right|
    \leq \eta'
    +\sup_N\left|\frac1N\sum_{i\leq N}\zeta_{n_{k(i)}}\right|
    \in L_{\Phi^*}.
  \end{align*}
  Since $\frac1N\sum_{i\leq N}\zeta_{n_{k(i)}}\rightarrow 0$ a.s.  by
  construction, we have Theorem~\ref{thm:Cesaro1}.

  Next, if $(\zeta_n)_n$ is not null in $\PB$, we can no longer hope
  for a ``universal bound'' for the regular part
  $(\zeta^r_n)_n$. However, once a subsequence $(n_k)_k$ is chosen we
  get
  \begin{align*}
    \bar\zeta_N:=\frac1N\sum_{k\leq N}\zeta_{n_k}=\frac1N\sum_{k\leq N}\zeta_{n_k}^r+\frac1N\sum_{k\leq N}\zeta_{n_k}^s=:\bar\zeta^r_N+\bar\zeta^s_N
    \rightarrow 0\text{ in }\PB,
  \end{align*}
  by the construction of $(\zeta_n)_n$. Again by
  Corollary~\ref{cor:DisjointCesaro}, $(\bar\zeta^s_N)_N$ is order
  bounded and norm null. In particular,
  $\bar\zeta_N^r=\bar\zeta_N-\bar\zeta^s_N\rightarrow 0$ in $\PB$, and
  $(\Phi^*(\bar\zeta^r_N))_N$ is uniformly integrable since $\Phi^*$
  is convex. Thus by Lemma~\ref{lem:OrderBoundedSubseq}, we find a
  subsequence $(N(i))_i$ such that $(\bar\zeta_{N(i)}^r)_i$, hence
  $(\bar\zeta_{N(i)})_i=(\bar\zeta_{N(i)}^r+\bar\zeta^s_{N(i)})_i$
  too, are order bounded.
\end{proof}

Since $(\bar\zeta_N^r)_N$ in the last paragraph is null in $\PB$ and
$(\Phi^*(\bar\zeta_N^r))_N$ is uniformly integrable, it is null in
$\tau(L_{\Phi^*},L_\Phi)$ by the last part of
Lemma~\ref{lem:OrderBoundedSubseq}. Thus we have also:

\begin{corollary}
  \label{cor:KomlosMackey}
  Any norm bounded sequence $(\xi_n)_n$ in $L_{\Phi^*}$ has a
  subsequence $(\xi_{n_k})_k$ and $\xi\in L_{\Phi^*}$ such that for
  any further subsequence $(n_{k(i)})_i$,
  $\frac1N\sum_{i\leq N}\xi_{n_{k(i)}}\rightarrow \xi$ in
  $\tau(L_{\Phi^*},L_\Phi)$.
\end{corollary}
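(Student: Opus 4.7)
The plan is to extract essentially the same subsequence as in Theorem~\ref{thm:Cesaro2}, and then, in place of the final passage to an order-convergent sub-subsequence, to invoke directly the ``in particular'' clause of Lemma~\ref{lem:OrderBoundedSubseq} to upgrade $\PB$-null sequences with uniformly integrable $\Phi^*$-images to $\tau(L_{\Phi^*},L_\Phi)$-null sequences. So the corollary really asks only to observe that the extraction already made in the proof of Theorem~\ref{thm:Cesaro2} is enough, provided we accept a weaker (Mackey, not order) mode of convergence.

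More concretely, after the standard reduction (Komlós + Fatou) to $\xi=0$ and $\|\xi_n\|_{\Phi^*}\leq 1$, I first apply Komlós's theorem to obtain a subsequence, still denoted $(\xi_n)_n$, along which Cesàro means of \emph{every} further subsequence tend to $0$ a.s. Then the Kadec--Pe\l{}czy\'nski theorem, applied to the $L_1$-bounded sequence $(\Phi^*(\xi_n))_n$, yields a subsequence $(\zeta_n)_n$ of $(\xi_n)_n$ and pairwise disjoint $A_n\in\FC$ such that, writing $\zeta_n^r:=\zeta_n\ind_{A_n^c}$ and $\zeta_n^s:=\zeta_n\ind_{A_n}$, the family $(\Phi^*(\zeta_n^r))_n$ is uniformly integrable. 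I take $(\zeta_n)_n$ as the asserted subsequence.

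For any further subsequence $(k(i))_i$, I split the Cesàro mean $\bar\zeta_N:=\frac1N\sum_{i\leq N}\zeta_{n_{k(i)}}=\bar\zeta_N^r+\bar\zeta_N^s$ exactly as in the proof of Theorem~\ref{thm:Cesaro2}. Corollary~\ref{cor:DisjointCesaro} applied to the norm-bounded disjoint sequence $(\zeta_{n_{k(i)}}^s)_i$ gives that $(\bar\zeta_N^s)_N$ is order bounded and norm null, hence in particular Mackey null; Komlós's construction gives $\bar\zeta_N\to 0$ a.s., and subtracting $\bar\zeta_N^s$ (null in norm, a fortiori in $\PB$) yields $\bar\zeta_N^r\to 0$ in $\PB$. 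By convexity of $\Phi^*$ and Jensen, $\Phi^*(\bar\zeta_N^r)\leq \frac1N\sum_{i\leq N}\Phi^*(\zeta_{n_{k(i)}}^r)$, so $(\Phi^*(\bar\zeta_N^r))_N$ is dominated by Cesàro averages of a uniformly integrable family and thus is itself uniformly integrable. The last assertion of Lemma~\ref{lem:OrderBoundedSubseq} therefore gives $\bar\zeta_N^r\to 0$ in $\tau(L_{\Phi^*},L_\Phi)$, and adding the Mackey-null $(\bar\zeta_N^s)_N$ completes the proof.

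There is really no substantive obstacle here, since all the analytical work has been done in Theorem~\ref{thm:Cesaro2}, Corollary~\ref{cor:DisjointCesaro} and Lemma~\ref{lem:OrderBoundedSubseq}. The only point requiring care is the bookkeeping: one has to notice that the Kadec--Pe\l{}czy\'nski splitting $\zeta_n=\zeta_n^r+\zeta_n^s$ and the uniform integrability of $(\Phi^*(\zeta_n^r))_n$ are preserved under passage to any further subsequence and under Cesàro averaging, so that the \emph{single} extraction made at the outset works uniformly in $(k(i))_i$.
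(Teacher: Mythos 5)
Your proof is correct and follows essentially the same route as the paper: the paper derives this corollary directly from the construction in the proof of Theorem~\ref{thm:Cesaro2}, noting exactly as you do that the regular part $(\bar\zeta_N^r)_N$ of the Cesàro means is null in $\PB$ with $(\Phi^*(\bar\zeta_N^r))_N$ uniformly integrable, hence Mackey null by the last part of Lemma~\ref{lem:OrderBoundedSubseq}, while the disjoint part is norm null by Corollary~\ref{cor:DisjointCesaro}. Your bookkeeping remark (stability of the Kadec--Pe\l{}czy\'nski splitting and of uniform integrability under further subsequences and Cesàro averaging) is precisely what the paper's one-paragraph derivation relies on implicitly.
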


At the moment, it is not clear if one can drop the assumption of
convergence in $\PB$ in Theorem~\ref{thm:Cesaro1}, or equivalently if
the Cesàro means in Theorem~\ref{thm:Cesaro2} are order bounded
without passing to a further subsequence. This question is left for a
future work. In applications, however, this point does not much
matter; since any norm bounded sequence in $L_{\Phi^*}$ (a fortiori
bounded in $L_1$) has an a.s. convergent sequence of forward convex
combinations by the usual Komlós theorem, and convex combinations of
convex combinations are convex combinations (cf. Cesàro means of
Cesàro means are not Cesàro means), we get the following utility grade
version of Theorem~\ref{thm:Cesaro1}.

\begin{corollary}
  \label{cor:ConvCombUseful}
  Any norm bounded sequence $(\xi_n)_n$ in $L_{\Phi^*}$ admits a
  sequence of forward convex combinations
  $\bar\xi_n\in\conv(\xi_k; k\geq n)$ as well as a $\xi\in L_{\Phi^*}$
  such that $\bar\xi_n\rightarrow\xi$ in order, i.e.
  $\sup_n|\bar\xi_n|\in L_{\Phi^*}$ and $\bar\xi_n\rightarrow \xi$
  a.s.
\end{corollary}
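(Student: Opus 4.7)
The plan is to combine the classical Komlós theorem in $L_1$ with Theorem~\ref{thm:Cesaro1}, using a ``tail Cesàro'' identity to keep all convex combinations forward.

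First, since $L_{\Phi^*}\hookrightarrow L_1$, the sequence $(\xi_n)_n$ is $L_1$-bounded. I would apply the classical Komlós theorem to extract a subsequence $(\xi_{n_k})_k$ and a limit $\xi\in L_1$ such that the Cesàro means of every further subsequence converge a.s.\ to $\xi$; Fatou's lemma applied to $\Phi^*$ places $\xi$ in $L_{\Phi^*}$. Then I set
\[
\eta_m := \frac{1}{m}\sum_{k=m+1}^{2m}\xi_{n_k} = 2\cdot\frac{1}{2m}\sum_{k=1}^{2m}\xi_{n_k} - \frac{1}{m}\sum_{k=1}^{m}\xi_{n_k}.
\]
Since $n_{m+1}\geq m+1$, this $\eta_m$ is a forward convex combination in $\conv(\xi_k;k\geq m)$; the identity shows $\eta_m\to\xi$ a.s., and $(\eta_m)_m$ is norm bounded in $L_{\Phi^*}$ by the triangle inequality.

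Next, I apply Theorem~\ref{thm:Cesaro1} to $(\eta_m)_m$, which converges in $\PB$ to $\xi\in L_{\Phi^*}$ and is norm bounded: this yields a subsequence $(\eta_{m_j})_j$ whose Cesàro means $\theta_J := \frac{1}{J}\sum_{j=1}^J \eta_{m_j}$ satisfy $\sup_J|\theta_J|\in L_{\Phi^*}$ and $\theta_J\to\xi$ a.s. Repeating the tail-Cesàro trick, I set
\[
\bar\xi_L := \frac{1}{L}\sum_{j=L+1}^{2L}\eta_{m_j} = 2\theta_{2L}-\theta_L.
\]
Using $m_j\geq j$ together with the inclusion $\eta_{m_j}\in\conv(\xi_k;k\geq j)$ from the first step, $\bar\xi_L$ lies in $\conv(\xi_k;k\geq L+1)\subset\conv(\xi_k;k\geq L)$; order boundedness and a.s.\ convergence to $\xi$ follow at once from $\bar\xi_L=2\theta_{2L}-\theta_L$ and the order convergence of $(\theta_J)_J$ granted by Theorem~\ref{thm:Cesaro1}.

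The only substantive obstacle is the index bookkeeping: a naive iteration of Komlós/Cesàro produces convex combinations of the $\xi_k$ starting from index $1$, not from index $L$, so they fail to be forward in the sense demanded by the statement. The ``tail'' identity $\frac{1}{n}\sum_{k=n+1}^{2n}=2\cdot\frac{1}{2n}\sum_{k=1}^{2n}-\frac{1}{n}\sum_{k=1}^n$, used both in the first extraction and in the final re-indexing, simultaneously preserves a.s.\ order convergence and forces the combinations to live on the increasing tails $\{\xi_k:k\geq n\}$.
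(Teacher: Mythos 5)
Your proposal is correct and is essentially the paper's own argument: the paper likewise first applies the classical Komlós theorem to obtain a.s.\ convergent \emph{forward} convex combinations, then feeds these (norm bounded, convergent in probability) into Theorem~\ref{thm:Cesaro1}, relying on the fact that forward convex combinations of forward convex combinations are again forward convex combinations. The tail-Cesàro identity $\frac1n\sum_{k=n+1}^{2n}=2\cdot\frac1{2n}\sum_{k=1}^{2n}-\frac1n\sum_{k=1}^{n}$ that you use to enforce forwardness is exactly the device the paper introduces just before Corollary~\ref{cor:DisjConvConv1} and leaves implicit in its brief justification of this corollary.
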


Regarding the property (\ref{eq:CProp}) of \citep{MR2648595}, we can
confirm that it is true for \emph{\bfseries bounded nets}, while the
boundedness cannot be dropped as noted in the introduction. Indeed, if
$(\xi_\alpha)_\alpha$ is a bounded net in $L_{\Phi^*}$ that converges
in $\sigma(L_{\Phi^*},L_\Phi)$ to $\xi\in L_{\Phi^*}$, then arguing as
in \citep[Lemma~6]{MR2648595}, one finds a sequence $(\alpha_n)$ of
indices as well as $\eta_n\in \conv(\xi_{\alpha_k};k\geq n)$ such that
$\eta_n\rightarrow \xi$ a.s. (this part is correct). Then
Corollary~\ref{cor:ConvCombUseful} yields
$\zeta_n\in\conv(\eta_k;k\geq n)\subset \conv(\xi_{\alpha_k};k\geq n)$
with $\sup_n|\zeta_n|\in L_{\Phi^*}$.

Finally, when $(\Omega,\FC,\PB)$ is atomless, these Komlós type
results characterise the $\Delta_2$-Orlicz spaces; in this case,
$\Phi\in\Delta_2$ if (and only if)
$\lim_n\|\xi\ind_{\{|\xi|>n\}}\|_{(\Phi)}=0$ for every $\xi\in L_\Phi$
(i.e. $\|\cdot\|_{(\Phi)}$ is order continuous on $L_\Phi$; see
\citep[Th.~133.4]{MR704021}).

\begin{theorem}
  \label{thm:Delta2Ch1}
  Suppose $(\Omega,\FC,\PB)$ is atomless, and let $\Phi$ be a (finite
  coercive) Young function (\textbf{not a priori assumed $\Delta_2$}). Then the
  following are equivalent:
  \begin{enumerate}
  \item $\Phi\in \Delta_2$;
  \item every norm bounded sequence in $L_{\Phi^*}$ has a subsequence
    with $\tau(L_{\Phi^*},L_\Phi)$-convergent Cesàro means;
  \item every norm bounded sequence in $L_{\Phi^*}$ has a
    $\sigma(L_{\Phi^*},L_\Phi)$-convergent sequence of forward convex
    combinations;
  \item every norm bound sequence in $L_{\Phi^*}$ has an order bounded
    sequence of forward convex combinations.
  \end{enumerate}
\end{theorem}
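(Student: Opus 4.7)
The plan is to close the cycle $(1)\Rightarrow(2)\Rightarrow(3)$, $(1)\Rightarrow(4)\Rightarrow(3)$, and $(3)\Rightarrow(1)$. The implications out of $(1)$ are almost immediate: $(1)\Rightarrow(2)$ is Corollary~\ref{cor:KomlosMackey}, and $(1)\Rightarrow(4)$ is Corollary~\ref{cor:ConvCombUseful}. For $(4)\Rightarrow(3)$, I apply the classical Komlós theorem in $L_1$ to the order bounded sequence $\bar\xi_n\in\conv(\xi_k;k\geq n)$ supplied by $(4)$, extracting a further sequence of forward convex combinations $\tilde\xi_n\in\conv(\xi_k;k\geq n)$ converging a.s.\ while still dominated by a single $\zeta\in L_{\Phi^*}$; the dominated convergence theorem (applied to $\eta\zeta\in L_1$ for each $\eta\in L_\Phi$) then yields $\sigma(L_{\Phi^*},L_\Phi)$-convergence. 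For $(2)\Rightarrow(3)$, I use the elementary observation that Cesàro means are convex combinations and $\tau$-convergence implies $\sigma$-convergence; after the shifting identity $\frac1{N-M}\sum_{k=M+1}^N\xi_{n_k}=\frac{N}{N-M}y_N-\frac{M}{N-M}y_M$ (with $y_N$ the Cesàro means of the subsequence) shows that tail Cesàro means — which are forward convex combinations of $(\xi_n)_n$ — still converge in $\tau$, a diagonal extraction produces the desired forward convex combinations.

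The main content is the contrapositive $(3)\Rightarrow(1)$: the strategy is to construct a norm bounded disjoint sequence in $L_{\Phi^*}$ all of whose forward convex combinations fail to $\sigma(L_{\Phi^*},L_\Phi)$-converge. Assuming $\Phi\notin\Delta_2$, so $\limsup_{x\rightarrow\infty}x\Phi'(x)/\Phi(x)=\infty$ by (\ref{eq:Delta2Equiv}), I pick $x_n\uparrow\infty$ with $x_n\Phi'(x_n)/\Phi(x_n)\geq 2^n+1$ and $\Phi(x_n)\geq 1$, set $c_n:=\Phi'(x_n)$, and invoke the Young equality $x_nc_n=\Phi(x_n)+\Phi^*(c_n)$ to obtain $\Phi^*(c_n)\geq 2^n\Phi(x_n)\geq 2^n$. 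Atomlessness lets me then choose pairwise disjoint $A_n\in\FC$ with $\PB(A_n)=1/\Phi^*(c_n)$, so that $\sum_n\PB(A_n)\leq 1$. Set $\xi_n:=c_n\ind_{A_n}\in L_{\Phi^*}$ and $\eta:=\sum_n x_n\ind_{A_n}$. Three short computations give $\EB[\Phi^*(\xi_n)]=1$ (so $\|\xi_n\|_{\Phi^*}\leq 1$), $\EB[\Phi(\eta)]=\sum_n\Phi(x_n)/\Phi^*(c_n)\leq\sum_n 2^{-n}<\infty$ (so $\eta\in L_\Phi$), and by Young's equality $\EB[\eta\xi_n]=x_nc_n\PB(A_n)=1+\Phi(x_n)/\Phi^*(c_n)\geq 1$. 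Therefore any forward convex combination $\bar\xi_n\in\conv(\xi_k;k\geq n)$ satisfies $\EB[\eta\bar\xi_n]\geq 1$; yet $\bar\xi_n$ is supported in $\bigcup_{k\geq n}A_k$, a set whose measure tends to $0$, so $\bar\xi_n\rightarrow 0$ in $\PB$. If such a $\bar\xi_n$ converged in $\sigma(L_{\Phi^*},L_\Phi)$ to some $\xi$, Corollary~\ref{cor:NullInPandWeakStarConv} would force $\xi=0$, contradicting $\EB[\eta\xi]=\lim_n\EB[\eta\bar\xi_n]\geq 1$; hence $(3)$ fails.

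The principal obstacle is this last construction: the parameters $x_n,c_n,\PB(A_n)$ must be calibrated so that $(\xi_n)_n$ sits in the unit ball of $L_{\Phi^*}$ while a single test function $\eta\in L_\Phi$ (necessarily outside the Orlicz heart when $\Phi\notin\Delta_2$) witnesses uniformly positive pairings with every tail convex combination. The decisive choice is the ``critical'' slope $c_n=\Phi'(x_n)$, because the Young equality at that point forces the exact identity $\EB[\eta\xi_n]=1+\Phi(x_n)/\Phi^*(c_n)$, and $\Phi\notin\Delta_2$ is precisely what allows $\Phi(x_n)/\Phi^*(c_n)$ to be made summable (so $\eta\in L_\Phi$) while simultaneously keeping $\EB[\eta\xi_n]\geq 1$.
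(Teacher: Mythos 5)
Your proof is correct, and its key direction takes a genuinely different route from the paper's. The implications out of (1) and into (3) match the paper: (1)~$\Rightarrow$~(2) is Corollary~\ref{cor:KomlosMackey}, (1)~$\Rightarrow$~(4) is Corollary~\ref{cor:ConvCombUseful}, and your explicit verifications of (2)~$\Rightarrow$~(3) (tail Cesàro means via the shifting identity) and (4)~$\Rightarrow$~(3) (Komlós in $L_1$ applied to the order bounded convex combinations, then dominated convergence against each $\eta\in L_\Phi$) simply spell out what the paper dismisses as ``clear''. The real difference is in (3)~$\Rightarrow$~(1). The paper invokes the lattice-theoretic characterisation (Zaanen, Th.~133.4) that over an atomless base $\Phi\in\Delta_2$ iff $\|\cdot\|_{(\Phi)}$ is order continuous, and from its failure obtains $\zeta_0\in\BB_\Phi$ such that $\zeta_0\BB_{\Phi^*}$ is not uniformly integrable, whence disjoint $A_n$ and $\eta_n\in\BB_{\Phi^*}$ with $\EB[\zeta_0\eta_n\ind_{A_n}]\geq\varepsilon$. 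You instead work directly from the analytic characterisation (\ref{eq:Delta2Equiv}): choosing $x_n$ with $x_n\Phi'(x_n)/\Phi(x_n)\geq 2^n+1$ and exploiting Young's equality at the critical slope $c_n=\Phi'(x_n)$, you build by hand the disjointly supported sequence $\xi_n=c_n\ind_{A_n}$ in $\BB_{\Phi^*}$ (atomlessness enters only to produce disjoint sets with $\PB(A_n)=1/\Phi^*(c_n)$) together with a single witness $\eta\in\BB_\Phi$ satisfying $\EB[\eta\xi_n]=1+\Phi(x_n)/\Phi^*(c_n)\geq 1$; the calibration checks out exactly as you claim ($\EB[\Phi^*(\xi_n)]=1$, $\EB[\Phi(\eta)]\leq\sum_n 2^{-n}$). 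The endgame is then identical in both proofs: forward convex combinations of a disjointly supported sequence are null in $\PB$, so Corollary~\ref{cor:NullInPandWeakStarConv} (valid without $\Delta_2$) forces any $\sigma(L_{\Phi^*},L_\Phi)$-limit to be $0$, contradicting the uniformly positive pairing. What each approach buys: yours is more elementary and self-contained, needing only (\ref{eq:Delta2Equiv}) and Young's equality, and it exhibits concretely how the failure of $\Delta_2$ manufactures the bad sequence; the paper's is shorter given the cited order-continuity result and situates the obstruction within the general Banach-lattice framework (non-uniform integrability of $\zeta_0\BB_{\Phi^*}$).
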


\begin{proof}
  (1) $\Rightarrow$ (4) is Corollary~\ref{cor:ConvCombUseful}, (1)
  $\Rightarrow$ (2) is Corollary~\ref{cor:KomlosMackey}, and (2)
  $\Rightarrow$ (3) and (4) $\Rightarrow$ (3) are clear. It remains to
  prove (3) $\Rightarrow$ (1).  Since $(\Omega,\FC,\PB)$ is atomless,
  $\Phi\not\in \Delta_2$ yields some $0\leq\zeta_0\in \BB_\Phi$ with
  $\lim_n\sup_{\eta\in
    \BB_{\Phi^*}}\EB[\zeta_0\eta\ind_{\{\zeta_0>n\}}]=\lim_n\|\zeta_0\ind_{\{\zeta_0>n\}}\|_{(\Phi)}>0$,
  so $\zeta_0\BB_{\Phi^*}$ is not uniformly integrable, hence there
  are $0\leq \eta_n\in \BB_{\Phi^*}$, \emph{\bfseries disjoint} sets
  $A_n\in\FC$, $n\geq 1$, and $\varepsilon>0$ such that
  $\EB[\zeta_0\eta_n\ind_{A_n}]\geq\varepsilon$ ($\forall n$). Then
  the
  bounded sequence $(\eta_n\ind_{A_n})_n$ has no
  $\sigma(L_{\Phi^*},L_\Phi)$-convergent forward convex combinations:
  if $\xi_n\in \conv(\eta_k\ind_{A_k}; k\geq n)$, $n\geq 1$, then
  $\xi_n\rightarrow0$ in $\PB$ since $A_n$ are disjoint, so the only
  possible $\sigma(L_{\Phi^*},L_\Phi)$-limit is $0$ by
  Corollary~\ref{cor:NullInPandWeakStarConv}, which is impossible
  since $\EB[\zeta_0\xi_n]\geq \inf_k\EB[\zeta_0\eta_k\ind_{A_k}]\geq
  \varepsilon$.
\end{proof}

\section{Closedness of Convex Sets}
\label{sec:ClosednessConti}

Now we deduce from Corollary~\ref{cor:ConvCombUseful} that
\begin{theorem}
  \label{thm:Main1}
  A \textbf{convex} subset $C\subset L_{\Phi^*}$ is
  $\sigma(L_{\Phi^*},L_\Phi)$-closed if and only if for every
  $\zeta\in L_{\Phi^*}$, the intersection $C\cap[-\zeta,\zeta]$ is
  closed in $L_0$ (i.e. order closed).
\end{theorem}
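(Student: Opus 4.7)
My plan is to prove both directions separately, with the nontrivial content concentrated in the ``if'' direction where Corollary~\ref{cor:ConvCombUseful} does the real work.

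For the easy direction, suppose $C$ is $\sigma(L_{\Phi^*},L_\Phi)$-closed. Fix $\zeta\in L_{\Phi^*}$. The order interval $[-\zeta,\zeta]$ is closed in $L_0$ (pointwise order is preserved under convergence in probability along subsequences). Suppose $\xi_n\in C\cap[-\zeta,\zeta]$ with $\xi_n\to\xi$ in $L_0$. Then $\xi\in[-\zeta,\zeta]$, and by \eqref{eq:MackeyOrderInterval} the convergence also takes place in the Mackey topology restricted to $[-\zeta,\zeta]$. Since $\tau(L_{\Phi^*},L_\Phi)$ is finer than $\sigma(L_{\Phi^*},L_\Phi)$, and $C$ is weak*-closed, we conclude $\xi\in C\cap[-\zeta,\zeta]$.

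For the ``if'' direction, by the Krein--Šmulian theorem combined with Proposition~\ref{prop:MackeyNorm}, it suffices to prove the following: if $(\xi_n)_n\subset C$ is norm bounded in $L_{\Phi^*}$ and $\xi_n\to\xi$ in $\PB$, then $\xi\in C$. Apply Corollary~\ref{cor:ConvCombUseful} to the sequence $(\xi_n)_n$ to obtain forward convex combinations $\bar\xi_n\in\conv(\xi_k;k\geq n)$ and some $\xi'\in L_{\Phi^*}$ with $\zeta:=\sup_n|\bar\xi_n|\in L_{\Phi^*}$ and $\bar\xi_n\to\xi'$ a.s. By convexity of $C$, every $\bar\xi_n$ belongs to $C\cap[-\zeta,\zeta]$, and $|\xi'|\leq\zeta$ so $\xi'\in[-\zeta,\zeta]$ as well; hence by the order-closedness hypothesis applied with $\zeta$, $\xi'\in C$.

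It remains to identify $\xi'$ with $\xi$. Writing $\bar\xi_n=\sum_k\lambda^n_k\xi_{n_k}$ as a (finite) convex combination with $n_k\geq n$, one has $|\bar\xi_n-\xi|\wedge 1\leq\sum_k\lambda^n_k(|\xi_{n_k}-\xi|\wedge 1)$, so
\begin{align*}
  \EB[|\bar\xi_n-\xi|\wedge 1]\leq\sup_{k\geq n}\EB[|\xi_k-\xi|\wedge 1]\to 0
\end{align*}
since $\xi_k\to\xi$ in $\PB$; thus $\bar\xi_n\to\xi$ in $\PB$ as well, and combining with $\bar\xi_n\to\xi'$ a.s.\ gives $\xi=\xi'\in C$.

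The only subtle point is the last paragraph, where one must verify that convex combinations inherit convergence in probability from the original sequence; this is elementary but should be spelled out because the existing a.s.\ convergence guaranteed by Corollary~\ref{cor:ConvCombUseful} is to an a priori unknown limit $\xi'$, and identifying $\xi'=\xi$ is what connects the abstract Komlós-type output with the specific sequence we started with.
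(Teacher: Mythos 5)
Your overall architecture matches the paper's --- necessity from (\ref{eq:MackeyOrderInterval}), sufficiency by reducing via Proposition~\ref{prop:MackeyNorm} (Krein--\v{S}mulian) to $L_0$-closedness on balls and then invoking Corollary~\ref{cor:ConvCombUseful} --- and you rightly flag that the corollary, used as a black box, only gives a.s.\ convergence to an a priori unknown $\xi'$ that must be identified with the $\PB$-limit $\xi$. However, your identification argument is wrong. The pointwise inequality $|\bar\xi_n-\xi|\wedge 1\leq\sum_k\lambda^n_k\bigl(|\xi_{n_k}-\xi|\wedge 1\bigr)$ goes the wrong way: $x\mapsto x\wedge 1$ is concave, so convex combinations lie above, not below (take $\lambda_1=\lambda_2=\tfrac12$, $|\xi_{n_1}-\xi|=2$, $|\xi_{n_2}-\xi|=0$: the left side is $1$, the right side is $\tfrac12$). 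Worse, the statement you are trying to prove with it --- that forward convex combinations inherit convergence in probability --- is false in $L_0$, which is not locally convex. Take $A_k$ independent with $\PB(A_k)=1/k$ and $\xi_k=k\ind_{A_k}$, so $\xi_k\to0$ in $\PB$; with weights $\lambda^n_k=c_n/k$ for $n\leq k\leq N_n$, where $c_n=\bigl(\sum_{k=n}^{N_n}1/k\bigr)^{-1}$, one has $\EB[\bar\xi_n]=1$ and $\mathrm{Var}(\bar\xi_n)\leq c_n$; choosing $N_n$ so large that $c_n\leq n^{-2}$ and applying Chebyshev and Borel--Cantelli gives $\bar\xi_n\to1$ a.s. In particular $\EB[|\bar\xi_n-0|\wedge1]\to1$ while $\sup_{k\geq n}\EB[|\xi_k-0|\wedge1]=1/n\to0$, contradicting your displayed estimate. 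So this is a genuine gap, not a missing detail.

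The gap is easy to close, in either of two ways. Elementary repair: since $\xi_n\to\xi$ in $\PB$, first pass to a subsequence $(\xi_{n_j})_j$ converging a.s.\ to $\xi$, and apply Corollary~\ref{cor:ConvCombUseful} to that subsequence; forward convex combinations of an \emph{a.s.\ convergent} sequence do converge a.s.\ to the same limit, because pointwise $|\bar\xi_j(\omega)-\xi(\omega)|\leq\sup_{k\geq j}|\xi_{n_k}(\omega)-\xi(\omega)|\to0$ --- the sup bound is valid $\omega$ by $\omega$, and it is only after truncating and integrating that it breaks down. Structural repair: norm boundedness and $\xi_n\to\xi$ in $\PB$ give $\xi_n\to\xi$ in $\sigma(L_{\Phi^*},L_\Phi)$ by (\ref{eq:MackeyBall}); forward convex combinations preserve weak* limits (each pairing $\EB[\eta\,\cdot\,]$ is linear), so $\bar\xi_n\to\xi$ in $\sigma(L_{\Phi^*},L_\Phi)$, while $\bar\xi_n\to\xi'$ in $\PB$; Corollary~\ref{cor:NullInPandWeakStarConv} applied to $\bar\xi_n-\xi'$ then forces $\xi'=\xi$. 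Note that both repairs use either genuine a.s.\ convergence or the $\Delta_2$/duality structure; your attempted argument used neither the norm bound nor $\Delta_2$ and would have held in raw $L_0$, which is a sign it could not have worked.
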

\begin{proof}
  The necessity is clear since $[-\zeta,\zeta]$ is closed in $L_0$ and
  $\tau(L_{\Phi^*},L_\Phi)|_{[-\zeta,\zeta]}=\tau_{L_0}|_{[-\zeta,\zeta]}$. For
  the sufficiency, it suffices that $C\cap \lambda\BB_{\Phi^*}$,
  $\lambda>0$, are closed in $L_0$
  (Proposition~\ref{prop:MackeyNorm}). Pick a sequence $(\xi_n)_n$ in
  $C\cap \lambda \BB_{\Phi^*}$ with $\xi_n\rightarrow\xi$ in $\PB$.
  Corollary~\ref{cor:ConvCombUseful} yields a sequence
  $\bar\xi_n\in\conv(\xi_k;k\geq n)\subset C$ (by convexity) with
  $\zeta:=\sup_n|\bar\xi_n|\in L_{\Phi^*}$, and
  $\bar\xi_n\rightarrow \xi$ a.s.  
  But $\lambda\BB_{\Phi^*}$ and $C\cap[-\zeta,\zeta]$ are
  $\tau_{L_0}$-closed, hence
  $\xi\in C\cap [-\zeta,\zeta]\cap \lambda\BB_{\Phi^*}$.
\end{proof}

To the best of our knowledge, this criterion for the weak*-closedness
is only known for \emph{\bfseries solid sets} (i.e.
$A\subset L_{\Phi^*}$ with $\zeta\in A$ and $|\xi|\leq |\zeta|$
$\Rightarrow$ $\xi\in A$); see \citep[Th.~4.20]{MR2011364}. But convex
functions with solid lower level sets are symmetric, so exclude all
non-trivial monotone convex functions, especially convex risk
measures. Also, since
$\sigma(L_{\Phi^*},L_\Phi)|_{[-\zeta,\zeta]}
\subset\tau_{L_0}|_{[-\zeta,\zeta]}
=\tau(L_{\Phi^*},L_\Phi)|_{[-\zeta,\zeta]}$,
$\zeta\in L_{\Phi^*}$ (by (\ref{eq:MackeyOrderInterval}) and
(\ref{eq:MackeyBall})), the condition is also equivalent to:
$C\cap [-\zeta,\zeta]$, $\zeta\in L_{\Phi^*}$, are
$\sigma(L_{\Phi^*},L_\Phi)$-closed.

\begin{remark}
  \label{rem:Gao}
  After our results were presented in Vienna Congress on Mathematical
  Finance, 12--14 September 2016
  (\href{https://fam.tuwien.ac.at/events/vcmf2016/}{https://fam.tuwien.ac.at/events/vcmf2016/}),
  and after a discussion with Niushan Gao, he and his collaborators
  \citep{GaoLeungXanthos2016} came up with their own proof of
  Theorem~\ref{thm:Main1}. They used a different technique which in
  our opinion will not yield a Komlós type theorem. The problem to get
  a Komlós type theorem was suggested by Hans Föllmer during the
  aforementioned Vienna conference.
\end{remark}

While the Mackey and weak* closed convex sets in the dual of a Banach
space are the same, \emph{\bfseries sequentially} Mackey closed convex
sets need not be (sequentially) weak* closed.  For instance,
$A=\{(\alpha_n)_n\in\ell_1:\alpha_1=\sum_{n\geq 2}\alpha_n\}$ is norm
closed but not sequentially weak* closed in $\ell_1=c_0'$ (see
\citep{banach32:_theor}), while since $\tau(\ell_1,c_0)$-convergent
\emph{\bfseries sequences} are norm convergent,
$A$ is sequentially $\tau(\ell_1,c_0)$-closed.  In our situation,
however, since
$\tau(L_{\Phi^*},L_\Phi)|_{[-\zeta,\zeta]}=\tau_{L_0}|_{[-\zeta,\zeta]}$,
$\zeta\in L_{\Phi^*}$, are metrisable, Theorem~\ref{thm:Main1} implies
that
\begin{corollary}
  \label{cor:SeqMackeyClosed}\mbox{}
  \textbf{Sequentially} $\tau(L_{\Phi^*},L_\Phi)$-closed convex sets
  in $L_{\Phi^*}$ are $\sigma(L_{\Phi^*},L_\Phi)$-closed.
\end{corollary}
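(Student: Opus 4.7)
The plan is to reduce the statement to Theorem~\ref{thm:Main1} via the key identification $\tau(L_{\Phi^*},L_\Phi)|_{[-\zeta,\zeta]}=\tau_{L_0}|_{[-\zeta,\zeta]}$ from (\ref{eq:MackeyOrderInterval}). Concretely, let $C\subset L_{\Phi^*}$ be convex and sequentially $\tau(L_{\Phi^*},L_\Phi)$-closed. By Theorem~\ref{thm:Main1}, to show that $C$ is $\sigma(L_{\Phi^*},L_\Phi)$-closed it suffices to check that $C\cap[-\zeta,\zeta]$ is closed in $L_0$ for every $\zeta\in L_{\Phi^*}$.

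So fix $\zeta$ and take a sequence $(\xi_n)_n\subset C\cap[-\zeta,\zeta]$ with $\xi_n\rightarrow\xi$ in $\PB$. Passing to a subsequence for the purpose of identifying the limit, convergence in probability yields a.s.\ convergence, and the order bound $|\xi_n|\leq\zeta$ persists in the limit, so $\xi\in[-\zeta,\zeta]$. Now invoke (\ref{eq:MackeyOrderInterval}): on the order interval $[-\zeta,\zeta]$, the Mackey topology $\tau(L_{\Phi^*},L_\Phi)$ agrees with the topology of convergence in probability. Hence $\xi_n\rightarrow\xi$ in $\tau(L_{\Phi^*},L_\Phi)$, and the sequential Mackey-closedness of $C$ gives $\xi\in C$. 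Thus $\xi\in C\cap[-\zeta,\zeta]$, which is therefore $L_0$-closed, and Theorem~\ref{thm:Main1} concludes the proof. There is essentially no obstacle here: the corollary is a direct packaging of Theorem~\ref{thm:Main1} together with (\ref{eq:MackeyOrderInterval}), which allowed us to convert an $L_0$-convergent sequence inside an order interval into a sequence Mackey-convergent to the same limit.
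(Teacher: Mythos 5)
Your proof is correct and is essentially the paper's own argument: the paper derives the corollary from Theorem~\ref{thm:Main1} precisely because $\tau(L_{\Phi^*},L_\Phi)|_{[-\zeta,\zeta]}=\tau_{L_0}|_{[-\zeta,\zeta]}$ is metrisable, which is exactly the reduction you carry out sequence by sequence. Your write-up just makes explicit the step the paper leaves implicit, namely converting an $L_0$-convergent sequence in $C\cap[-\zeta,\zeta]$ into a Mackey-convergent one so that sequential Mackey closedness applies.
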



Now the dual representation of proper convex functions on
$L_{\Phi^*}$, or equivalently the $\sigma(L_{\Phi^*},L_\Phi)$-lsc
($\Leftrightarrow$ $\tau(L_{\Phi^*},L_\Phi)$-lsc), is 
characterised as follows.
\begin{theorem}
  \label{thm:DualRep1}
  For a proper convex function $f$ on $L_{\Phi^*}$, the following are
  equivalent:
  \begin{enumerate}
  \item $f$ is $\sigma(L_{\Phi^*},L_\Phi)$-lsc, or equivalently
    $f(\xi)=\sup_{\eta\in L_\Phi}(\EB[\eta\xi]-f^*(\eta))$,
    $\xi\in L_{\Phi^*}$;
  \item $f$ is \textbf{sequentially} $\tau(L_{\Phi^*},L_\Phi)$-lsc;
  \item $f$ is $\tau_{L_0}$-lsc on every order interval
    $[-\zeta,\zeta]$ ($\zeta\in L_{\Phi^*}$), or equivalently order
    lsc: $f(\xi)\leq \liminf_nf(\xi_n)$ whenever $\xi_n\rightarrow\xi$
    a.s. and $(\xi_n)_n$ is order bounded in $L_{\Phi^*}$.
  \end{enumerate}
\end{theorem}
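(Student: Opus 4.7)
The plan is to translate the statement into a condition on the sublevel sets $C_c:=\{f\leq c\}$, which are convex by convexity of $f$, and then apply Theorem~\ref{thm:Main1} together with the metrisability of the Mackey topology on order intervals from Lemma~\ref{lem:MackeyOrliczRelations}. I would run the equivalence as the cycle (1) $\Rightarrow$ (2) $\Rightarrow$ (3) $\Rightarrow$ (1).

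The implication (1) $\Rightarrow$ (2) is immediate since $\tau(L_{\Phi^*},L_\Phi)\supset \sigma(L_{\Phi^*},L_\Phi)$, so $\sigma$-lsc forces Mackey lsc, a fortiori sequentially Mackey lsc. For (2) $\Rightarrow$ (3), fix $\zeta\in L_{\Phi^*}$; by (\ref{eq:MackeyOrderInterval}), $\tau(L_{\Phi^*},L_\Phi)|_{[-\zeta,\zeta]}=\tau_{L_0}|_{[-\zeta,\zeta]}$, and this common topology is metrisable. Hence if $(\xi_n)_n\subset[-\zeta,\zeta]$ and $\xi_n\rightarrow\xi\in[-\zeta,\zeta]$ in $\PB$, then $\xi_n\rightarrow \xi$ in $\tau(L_{\Phi^*},L_\Phi)$, so (2) yields $f(\xi)\leq\liminf_n f(\xi_n)$; on the metrisable space $[-\zeta,\zeta]$ sequential lsc coincides with lsc, which is (3).

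The crux is (3) $\Rightarrow$ (1), where Theorem~\ref{thm:Main1} does the heavy lifting: for each $c\in\RB$, $C_c$ is convex and $C_c\cap [-\zeta,\zeta]=\{\xi\in[-\zeta,\zeta]:f(\xi)\leq c\}$ is $\tau_{L_0}$-closed in $[-\zeta,\zeta]$ by (3); since $[-\zeta,\zeta]$ itself is $\tau_{L_0}$-closed in $L_{\Phi^*}$, this intersection is $\tau_{L_0}$-closed in $L_{\Phi^*}$. Theorem~\ref{thm:Main1} then gives that $C_c$ is $\sigma(L_{\Phi^*},L_\Phi)$-closed, and hence $f$ is $\sigma(L_{\Phi^*},L_\Phi)$-lsc. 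The equivalence with the dual representation $f(\xi)=\sup_{\eta\in L_\Phi}(\EB[\eta\xi]-f^*(\eta))$ is the Fenchel--Moreau biconjugation theorem on $(L_{\Phi^*},L_\Phi)$.

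There is no real obstacle here once Theorem~\ref{thm:Main1} is in hand; the whole argument is a sublevel-set translation augmented by the metrisability in Lemma~\ref{lem:MackeyOrliczRelations}. The only minor point to verify carefully is that $\tau_{L_0}$-lsc on $[-\zeta,\zeta]$ (as a function on the order interval) is the right hypothesis to conclude $\tau_{L_0}$-closedness of the trace $C_c\cap[-\zeta,\zeta]$, which is immediate from the definition of lsc via sublevel sets.
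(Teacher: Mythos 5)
Your proposal is correct and follows exactly the route the paper intends: the paper gives no separate proof of Theorem~\ref{thm:DualRep1}, presenting it as the sublevel-set translation of Theorem~\ref{thm:Main1}, with the equivalence of (2) and (3) coming from the identity $\tau(L_{\Phi^*},L_\Phi)|_{[-\zeta,\zeta]}=\tau_{L_0}|_{[-\zeta,\zeta]}$ and its metrisability (Lemma~\ref{lem:MackeyOrliczRelations}), and the dual representation from Fenchel--Moreau biconjugation. Your writeup simply makes this implicit argument explicit, so it matches the paper's approach.
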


For the $\tau(L_{\Phi^*},L_\Phi)$-continuity, we have
\begin{theorem}
  \label{thm:MackeyConti}
  For any convex function $f:L_{\Phi^*}\rightarrow \RB$, the following
  are equivalent:
  \begin{enumerate}
  \item $f$ is $\tau(L_{\Phi^*},L_\Phi)$-continuous on $L_{\Phi^*}$;
  \item $f$ is \textbf{sequentially}
    $\tau(L_{\Phi^*},L_\Phi)$-continuous on $L_{\Phi^*}$;
  \item $f$ is \textbf{sequentially}
    $\tau(L_{\Phi^*},L_\Phi)$-continuous on \textbf{closed balls
      $\lambda\BB_{\Phi^*}$} ($\lambda>0$);
  \item $f$ is \textbf{sequentially}
    $\tau(L_{\Phi^*},L_\Phi)$-continuous on \textbf{order intervals};
  \item $f$ is $\tau_{L_0}$-continuous on \textbf{order intervals}, or
    equivalently order continuous, i.e. $f(\xi)=\lim_nf(\xi_n)$
    whenever $\xi_n\rightarrow\xi$ a.s. and $(\xi_n)_n$ is order
    bounded in $L_{\Phi^*}$.
  \end{enumerate}
\end{theorem}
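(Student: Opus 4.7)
The plan is to run the cycle (1) $\Rightarrow$ (2) $\Rightarrow$ (3) $\Rightarrow$ (4) $\Leftrightarrow$ (5) $\Rightarrow$ (1), where only the last implication requires real work. The first three implications are formal: Mackey continuity implies sequential Mackey continuity, sequential continuity restricts to any subset, and $[-\zeta,\zeta]\subset\|\zeta\|_{\Phi^*}\BB_{\Phi^*}$ gives (3) $\Rightarrow$ (4). For (4) $\Leftrightarrow$ (5), I would invoke (\ref{eq:MackeyOrderInterval}) from Lemma~\ref{lem:MackeyOrliczRelations}: on each order interval, $\tau(L_{\Phi^*},L_\Phi)$ coincides with $\tau_{L_0}$, and this common topology is metrisable, so sequential continuity and continuity agree there.

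The substantive implication is (5) $\Rightarrow$ (1), which I would split into Mackey lower and upper semicontinuity. For lsc, note that (5) is exactly condition (3) of Theorem~\ref{thm:DualRep1}, which directly gives $\sigma(L_{\Phi^*},L_\Phi)$-lsc, a fortiori Mackey lsc. For usc, I would analyse the convex sublevel sets $C_c:=\{f\leq c\}$, $c\in\RB$. By (5), $f|_{[-\zeta,\zeta]}$ is $\tau_{L_0}$-continuous for every $\zeta\in L_{\Phi^*}$, so $C_c\cap[-\zeta,\zeta]$ is $\tau_{L_0}$-closed in $[-\zeta,\zeta]$, and hence in $L_0$ since $[-\zeta,\zeta]$ itself is $\tau_{L_0}$-closed. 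Theorem~\ref{thm:Main1} then upgrades this to $C_c$ being $\sigma(L_{\Phi^*},L_\Phi)$-closed, and because the Mackey and weak-star topologies share the same continuous linear functionals, their closed convex sets coincide; thus $C_c$ is $\tau(L_{\Phi^*},L_\Phi)$-closed. Lower and upper semicontinuity together yield (1).

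The main obstacle has in effect already been overcome: the Komlós-type Corollary~\ref{cor:ConvCombUseful} was the engine powering Theorem~\ref{thm:Main1}, and it is precisely that result which converts the order-interval continuity of $f$ into global weak-star (hence Mackey) closedness of its sublevel sets. No further Komlós argument, and no new compactness input, should be needed for this theorem; the only care required is in the bookkeeping between weak-star and Mackey closedness for convex sets, which is classical via Mackey--Arens.
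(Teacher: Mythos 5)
Your reduction (1) $\Rightarrow$ (2) $\Rightarrow$ (3) $\Rightarrow$ (4), the equivalence (4) $\Leftrightarrow$ (5) via (\ref{eq:MackeyOrderInterval}), and the lower-semicontinuity half of (5) $\Rightarrow$ (1) via Theorem~\ref{thm:DualRep1} all match the paper. The gap is in your ``upper semicontinuity'' step, and it is fatal: closedness of the sublevel sets $C_c=\{f\leq c\}$ for a topology is, by definition, \emph{lower} semicontinuity for that topology, not upper semicontinuity. Your second argument therefore re-proves Mackey lsc (and adds nothing, since weak*-closed and Mackey-closed convex sets already coincide by Mackey--Arens), while usc is never addressed; nor can it be addressed this way, because usc would require closedness of the superlevel sets $\{f\geq c\}$, which are not convex, so Theorem~\ref{thm:Main1} says nothing about them.

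The error shows on a concrete example. Suppose $(\Omega,\FC,\PB)$ is atomless and $\Phi^*\notin\Delta_2$ (so $L_\Phi$ is not reflexive), and take $f=\|\cdot\|_{\Phi^*}$: it is finite and convex, and every sublevel set $c\,\BB_{\Phi^*}$ is order closed by Fatou's lemma, hence $\sigma(L_{\Phi^*},L_\Phi)$-closed by Theorem~\ref{thm:Main1} itself --- so your argument would conclude that the norm is $\tau(L_{\Phi^*},L_\Phi)$-continuous. But a dual norm is Mackey continuous only in the reflexive case: a Mackey neighbourhood of $0$ contained in $\varepsilon\BB_{\Phi^*}$ contains the polar $K^\circ$ of some weakly compact $K\subset L_\Phi$, whence $\BB_\Phi\subset\varepsilon^{-1}K$ by the bipolar theorem. (Consistently with the theorem, this $f$ violates (5), since $\|\cdot\|_{\Phi^*}$ fails order continuity when $\Phi^*\notin\Delta_2$.) What is missing is exactly the ``new compactness input'' you assert is unnecessary: the paper's proof of (5) $\Rightarrow$ (1) first gets $f=f^{**}$ from Theorem~\ref{thm:DualRep1}, then invokes Moreau's theorem, which reduces Mackey continuity to the $\sigma(L_\Phi,L_{\Phi^*})$-compactness of the conjugate's sublevel sets $\Lambda_c=\{\eta\in L_\Phi: f^*(\eta)\leq c\}$; this compactness is established through Lemma~\ref{lem:CompactSets} by showing each $\Lambda_c\xi$ is uniformly integrable, using Young's inequality together with the full strength of (5) --- the continuity (not merely lsc) of $f$ along order-dominated sequences $\lambda\xi\ind_{A_n}\rightarrow 0$. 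Corollary~\ref{cor:ConvCombUseful} alone cannot substitute for this step.
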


\begin{proof}
  (1) $\Rightarrow$ (2) $\Rightarrow$ (3) $\Rightarrow$ (4) are
  trivial; (4) $\Leftrightarrow$ (5) since $\tau(L_{\Phi^*},L_\Phi)$
  coincides on order bounded sets with $\tau_{L_0}$. Suppose
  (5). Then, by Theorem~\ref{thm:DualRep1}, $f=f^{**}$, so by Moreau's
  theorem \citep{MR0160093}, it suffices that each
  $\Lambda_c:=\{\eta\in L_\Phi:f^*(\eta)\leq c\}$, $c\in\RB$, is
  $\sigma(L_\Phi,L_{\Phi^*})$-compact. By Young's inequality, for any
  $\lambda>0$, $\xi\in L_{\Phi^*}$ and $\eta\in\Lambda_c$,
  \begin{equation}
    \label{eq:ProofConti1}
    |\EB[\eta\xi\ind_A]|=\EB[\eta\xi\ind_A]\vee\EB[\eta(-\xi)\ind_A]\leq
  \frac1\lambda(f(\lambda\xi\ind_A)\vee f(-\lambda\xi\ind_A)+c)
  \end{equation}
  which implies that $\Lambda_c\xi$, $\xi\in L_{\Phi^*}$, are
  uniformly integrable, thus $\Lambda_c$ is
  $\sigma(L_\Phi,L_{\Phi^*})$-compact. For if $\Lambda_c\xi$ were not
  uniformly integrable, there would be $\varepsilon>0$, $A_n\in\FC$
  and $\eta_n\in\Lambda_c$ such that $\PB(A_n)\leq 2^{-n}$ and
  $|\EB[\eta_n\xi\ind_{A_n}]|\geq \varepsilon$; here note that
  $\EB[|\zeta|\ind_A]\geq2\varepsilon$ implies either
  $|\EB[\zeta\ind_{A\cap \{\zeta>0\}}]|\geq\varepsilon$ or
  $|\EB[\zeta\ind_{A\cap \{\zeta<0\}}]|\geq\varepsilon$ and
  $\PB(A\cap\{\zeta\gtrless0\})\leq \PB(A)$. But since
  $|\lambda\xi\ind_{A_n}|\leq\lambda |\xi|$ and
  $\lambda\xi\ind_{A_n}\rightarrow 0$ in $\PB$ for each $\lambda>0$,
  (5) and (\ref{eq:ProofConti1}) together with a diagonal argument
  show that $|\EB[\eta_n\xi\ind_{A_n}]|\rightarrow 0$, a
  contradiction.
\end{proof}

The property that $f$ is (sequentially) $\tau_{L_0}$-continuous on
every closed ball implies (via (5)) the Mackey continuity of $f$. The
converse implication holds for all finite convex functions if and only
if
$\tau(L_{\Phi^*},L_\Phi)|_{\BB_{\Phi^*}}=\tau_{L_0}|_{\BB_{\Phi^*}}$. Indeed,
seminorms generating the Mackey topology are finite valued Mackey
continuous convex functions. As we saw in Remark~\ref{rem:MackeyBDD1},
this is not the case if $\Phi(x)=x^2$; more generally, 
it fails whenever $\Phi^*\in\Delta_2$ (then $L_\Phi$ is
reflexive). Precisely when $\tau(L_{\Phi^*},L_\Phi)$ coincide with
$\tau_{L_0}$ on $\BB_{\Phi^*}$ is a subtle question which is left for
further investigation.

\begin{remark}
  \label{rem:Conti1}
  In the proof of (5) $\Rightarrow$ (1), we only used the facts that
  $f=f^{**}$ and $f|_{[-\zeta,\zeta]}$ is $\tau_{L_0}$-continuous at
  $0$, from which we derived that $f$ is
  $\tau(L_{\Phi^*},L_\Phi)$-continuous at $0$. Thus if $f$ is a priori
  supposed to be $\sigma(L_{\Phi^*},L_\Phi)$-lsc on $L_{\Phi^*}$ (or
  any of its equivalents in Theorem~\ref{thm:DualRep1}), and
  $f(\xi_0)<\infty$ (we can suppose $\xi_0=0$ by translation), the
  following remain equivalent: (1$'$) $f$ is
  $\tau(L_{\Phi^*},L_\Phi)$-continuous at $\xi_0$, (2$'$) $f$
  sequentially $\tau(L_{\Phi^*},L_\Phi)$-continuous at $\xi_0$, (3$'$)
  $f(\xi_0)=\lim_nf(\xi_n)$ whenever $\xi_n\rightarrow \xi_0$ in
  $\tau(L_{\Phi^*},L_\Phi)$ and $\sup_n\|\xi_n\|_{\Phi^*}<\infty$,
  (4$'$) the same but with $|\xi_n|\leq \zeta$ for some
  $\zeta\in L_{\Phi^*}^+$, (5$'$) the same but with
  $\xi_n\rightarrow\xi_0$ in $\PB$ and $|\xi_n|\leq \zeta$ for some
  $\zeta\in L_{\Phi^*}^+$.
\end{remark}

\subsection{Application to Monetary Utility Functions}
\label{sec:MonUtil}

In utility theory, \emph{\bfseries concave} functions
$u:L_{\Phi^*}\rightarrow \RB\cup\{-\infty\}$ satisfying the following
properties are called \emph{\bfseries monetary utility functions} (see
e.g. \citep{delbaen12:_monet_utilit_funct,MR2779313}):
\begin{align}
  \label{eq:Positivity}
  &u(0)=0; \, \xi\in L_{\Phi^*},\, \xi\geq 0\,\Rightarrow\, u(\xi)\geq 0;\\
  \label{eq:Cash}
  &a\in \RB,\, \xi\in L_{\Phi^*}\,\Rightarrow\, u(\xi+a)=u(\xi)+a.
\end{align}
Since $-u$ is a convex function, which is called a
\emph{\bfseries convex risk measure}, Theorems~\ref{thm:DualRep1}
and~\ref{thm:MackeyConti} with obvious change of sign characterise the
basic regularities of $u$ for the Mackey topology
$\tau(L_{\Phi^*},L_\Phi)$.  (\ref{eq:Positivity}) and (\ref{eq:Cash})
then give an even better description.

\begin{theorem}
  \label{thm:Utility1}
  A monetary utility function
  $u:L_{\Phi^*}\rightarrow\RB\cup\{-\infty\}$ is
  $\sigma(L_{\Phi^*},L_\Phi)$-upper semicontinuous (or what is the
  same, $\tau(L_{\Phi^*},L_\Phi)$-upper semicontinuous) if and only if
  it is continuous from above:
  \begin{align}
    \label{eq:ContiAbove}
    \xi_n\downarrow \xi\,\Rightarrow \, u(\xi)= \lim_n u(\xi_n).
  \end{align}
  In this case, the dual representation of $u$ can be written as
  \begin{equation}
    \label{eq:DualRepUtility}
    u(\xi)=\inf\{\EB_Q[\xi]+c(Q): c(Q)<\infty\},
  \end{equation}
  where $Q$ runs through probabilities absolutely continuous
  w.r.t. $\PB$ with $dQ/d\PB\in L_\Phi$, $c(Q)=(-u)^*(-dQ/d\PB)$ and
  $\EB_Q[\xi]=\EB[\xi dQ/d\PB]$.
\end{theorem}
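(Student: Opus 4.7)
The plan is to reduce both claims to Theorem~\ref{thm:DualRep1} applied to $f := -u$ and then exploit the structural conditions (\ref{eq:Positivity}) and (\ref{eq:Cash}) to refine the dual formula. As is standard for this class, concavity together with (\ref{eq:Positivity}) and (\ref{eq:Cash}) implies that $u$ is monotone, i.e.\ $\xi \geq \eta$ implies $u(\xi) \geq u(\eta)$, which I use throughout without further comment.

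For the first equivalence, $u$ is $\sigma(L_{\Phi^*},L_\Phi)$-usc iff $-u$ is $\sigma(L_{\Phi^*},L_\Phi)$-lsc, which by Theorem~\ref{thm:DualRep1} is equivalent to the order-lsc condition $u(\xi) \geq \limsup_n u(\xi_n)$ whenever $\xi_n \to \xi$ a.s.\ and $(\xi_n)$ is order bounded in $L_{\Phi^*}$. Assuming this and $\xi_n \downarrow \xi$, order-boundedness of $(\xi_n)$ by $|\xi_1|$ yields $\limsup u(\xi_n) \leq u(\xi)$, and monotonicity gives $u(\xi_n) \geq u(\xi)$, hence (\ref{eq:ContiAbove}). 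For the converse, given $\xi_n \to \xi$ a.s.\ with $|\xi_n| \leq \zeta \in L_{\Phi^*}$, I let $\eta_n$ be the essential supremum of $\{\xi_k : k \geq n\}$ in $L_0$; then $|\eta_n| \leq \zeta$, so $\eta_n \in L_{\Phi^*}$, and $\eta_n \downarrow \xi$. Monotonicity gives $u(\xi_n) \leq u(\eta_n)$, and (\ref{eq:ContiAbove}) yields $\limsup u(\xi_n) \leq \lim u(\eta_n) = u(\xi)$, proving order-usc of $u$.

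For the dual representation, Theorem~\ref{thm:DualRep1} applied to $-u$ delivers, after the substitution $\eta \to -\eta$,
\begin{equation*}
u(\xi) = \inf_{\eta \in L_\Phi} \bigl( (-u)^*(-\eta) + \EB[\eta \xi] \bigr).
\end{equation*}
To reduce this to (\ref{eq:DualRepUtility}) I restrict the effective domain of the penalty: testing the supremum defining $(-u)^*(-\eta)$ with $\xi = \lambda \ind_{\{\eta < 0\}}$ and letting $\lambda \to \infty$ uses (\ref{eq:Positivity}) to force $\eta \geq 0$ a.s.\ whenever $(-u)^*(-\eta) < \infty$; replacing $\xi$ by $\xi + a$ inside the supremum and invoking (\ref{eq:Cash}) yields the identity $(-u)^*(-\eta) = (-u)^*(-\eta) + a(1 - \EB[\eta])$ for every $a \in \RB$, which is possible only if $\EB[\eta] = 1$. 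Thus the finite-penalty $\eta$'s are precisely the densities $dQ/d\PB$ of probability measures $Q \ll \PB$ with $dQ/d\PB \in L_\Phi$, yielding (\ref{eq:DualRepUtility}) with $c(Q) := (-u)^*(-dQ/d\PB)$. The main obstacle, in my view, is the sign bookkeeping between $u$ and $-u$ together with the construction of the decreasing envelope $\eta_n$ inside $L_{\Phi^*}$ in the converse direction; the effective-domain argument via (\ref{eq:Positivity}) and (\ref{eq:Cash}) is routine once the framework is in place.
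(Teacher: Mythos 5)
Your overall architecture matches the paper's: both proofs funnel everything through Theorem~\ref{thm:DualRep1} applied to $f=-u$, use monotonicity of $u$ to pass between order upper semicontinuity and continuity from above, and obtain (\ref{eq:DualRepUtility}) by restricting the effective domain of the penalty (your computations forcing $\eta\geq 0$ and $\EB[\eta]=1$ are correct, and the paper indeed dismisses that part as standard). Your envelope trick $\eta_n=\sup_{k\geq n}\xi_k$ is also essentially how one fills in the paper's terse claim that, given monotonicity, order upper semicontinuity is equivalent to (\ref{eq:ContiAbove}).

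The genuine gap is your opening sentence: for $u$ taking values in $\RB\cup\{-\infty\}$, monotonicity is \emph{not} a standard consequence of concavity, (\ref{eq:Positivity}) and (\ref{eq:Cash}) alone, and this is exactly the point on which the paper's proof spends all of its effort. The standard argument (and the paper's) writes $\eta+\varepsilon\xi^-$ as a convex combination $\lambda_\varepsilon\xi+(1-\lambda_\varepsilon)\zeta_\varepsilon$ with $\zeta_\varepsilon\geq0$, which yields $u(\eta+\varepsilon\xi^-)\geq\lambda_\varepsilon u(\xi)$ for $\varepsilon\in(0,1)$; to conclude $u(\eta)\geq u(\xi)$ one must then pass to the limit $\varepsilon\downarrow0$, i.e.\ know that $u(\eta)=\lim_{\varepsilon\downarrow 0}u(\eta+\varepsilon\xi^-)$. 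But an extended-real-valued concave function can jump \emph{down} at the endpoint of a segment lying on the boundary of its effective domain: for instance $g(0)=-\infty$, $g(\varepsilon)=0$ for $\varepsilon\in(0,1]$ is concave. The paper closes this limit step by invoking precisely the hypothesis (\ref{eq:ContiAbove}) (since $\eta+n^{-1}\xi^-\downarrow\eta$), and its remark following the theorem---that for \emph{finite-valued} $u$ the axioms do suffice, because $\varepsilon\mapsto u(\eta+\varepsilon\xi^-)$ is then a finite concave function on all of $\RB$, hence continuous at $\varepsilon=0$---is an implicit warning that the implication you call standard is specific to the finite-valued case. Concretely: in your sufficiency direction the gap is repairable, since (\ref{eq:ContiAbove}) is available there and the paper's argument for (\ref{eq:UtilMon}) supplies exactly the missing proof; in your necessity direction (\ref{eq:ContiAbove}) is not yet available, and monotonicity must instead be deduced from upper semicontinuity itself---for example via the dual representation, where your own final paragraph shows that every $\eta$ with finite penalty satisfies $\eta\geq 0$, so that $u$ is an infimum of monotone affine functions. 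As written, however, ``monotone, which I use throughout without further comment'' leaves the central step of the theorem unproved.
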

\begin{proof}
  The necessity is clear from Theorem~\ref{thm:DualRep1} since
  $\xi_n\downarrow\xi$ implies $\xi_n\rightarrow \xi$ in order. For
  the sufficiency, we first show that
  (\ref{eq:Positivity})--(\ref{eq:ContiAbove}) imply that $u$ is
  monotone, i.e.
  \begin{equation}
    \label{eq:UtilMon}
    \xi,\eta\in L_{\Phi^*},\,    \xi\leq \eta \,\Rightarrow \, u(\xi)\leq u(\eta)
  \end{equation}
  We can suppose $u(\xi)=0$ thanks to (\ref{eq:Cash}). For each
  $\varepsilon\in (0,1)$, let
  $\alpha_\varepsilon=(1-\varepsilon)/\varepsilon$ so that
  $\zeta_\varepsilon:=\eta+\varepsilon\xi^-+\alpha_\varepsilon(\eta+\varepsilon\xi^--\xi)\geq
  0$.
  Putting
  $\lambda_\varepsilon:=\alpha_\varepsilon/(1+\alpha_\varepsilon)\in
  (0,1)$,
  we have
  $\eta+\varepsilon\xi^-=\lambda_\varepsilon
  \xi+(1-\lambda_\varepsilon)\zeta_\varepsilon$,
  hence by the concavity,
  $u(\eta+\varepsilon\xi^-)\geq \lambda_\varepsilon
  u(\xi)+(1-\lambda_\varepsilon)u(\zeta_\varepsilon)\geq 0$.
  Then (\ref{eq:ContiAbove}) shows that
  $u(\eta)=\lim_nu(\eta+n^{-1}\xi^-)\geq 0=u(\xi)$.  Now by
  Theorem~\ref{thm:DualRep1} applied to the \emph{\bfseries convex}
  function $-u$, the $\sigma(L_{\Phi^*},L_\Phi)$-upper semicontinuity
  of $u$ is equivalent to the property that
  $u(\xi)\geq \limsup_n u(\xi_n)$ whenever $\xi_n\rightarrow \xi$
  a.s. and $(\xi_n)_n$ is order bounded in $L_{\Phi^*}$; given the
  monotonicity (\ref{eq:UtilMon}) of $u$, this is equivalent to
  (\ref{eq:ContiAbove}). That the dual representation of $f=-u$
  together with (\ref{eq:Positivity}) and (\ref{eq:Cash}) yields
  (\ref{eq:DualRepUtility}) is standard.
\end{proof}

Note that if $u$ is finite valued ($\RB$-valued),
(\ref{eq:Positivity}) and (\ref{eq:Cash}) still imply
(\ref{eq:UtilMon}) without assuming (\ref{eq:ContiAbove}). For
$\varepsilon\mapsto u(\eta+\varepsilon\xi^-)$ is continuous as a
finite valued convex function on $\RB$. One can easily see also that
any monetary utility function that is
$\tau(L_{\Phi^*},L_\Phi)$-continuous at $0$ is finite valued.  For
such $u$, Theorem~\ref{thm:MackeyConti} yields that
\begin{theorem}
  \label{thm:UtilityMackey}
  A monetary utility function $u:L_{\Phi^*}\rightarrow\RB$ is
  $\tau(L_{\Phi^*},L_\Phi)$-continuous if (and only if) it is
  continuous from below, i.e. $\xi_n\uparrow \xi$ $\Rightarrow$
  $u(\xi)=\lim_nu(\xi_n)$.
\end{theorem}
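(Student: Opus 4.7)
Plan: The ``only if'' direction is immediate from Lemma~\ref{lem:MackeyOrliczRelations}: any monotone increasing sequence $\xi_n\uparrow\xi$ in $L_{\Phi^*}$ is trapped in the order interval $[-|\xi_1|\vee|\xi|,|\xi_1|\vee|\xi|]$, so $\xi_n\to\xi$ a.s. already means $\xi_n\to\xi$ in $\tau(L_{\Phi^*},L_\Phi)$ by (\ref{eq:MackeyOrderInterval}); Mackey continuity of $u$ therefore gives $u(\xi_n)\to u(\xi)$. The substance is the ``if'' direction, which I will deduce from Theorem~\ref{thm:MackeyConti} applied to the convex function $-u$: it suffices to show that $u$ is $\tau_{L_0}$-continuous on every order interval. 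Throughout, I use that $u$ is monotone, which holds for finite-valued $u$ as remarked right after the proof of Theorem~\ref{thm:Utility1}.

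The main obstacle is to bootstrap the one-sided hypothesis (continuity from below) to the two-sided continuity needed on order intervals; the crux is the following concavity trick that yields continuity from above. Given $\xi_n\downarrow\xi$ in $L_{\Phi^*}$, set $\eta_n:=2\xi-\xi_n\in L_{\Phi^*}$. Since $\xi_n\downarrow\xi$, the sequence $\eta_n$ increases to $\xi$, so continuity from below supplies $u(\eta_n)\to u(\xi)$. Writing $\xi=\tfrac{1}{2}(\xi_n+\eta_n)$, concavity gives $u(\xi)\geq\tfrac{1}{2}(u(\xi_n)+u(\eta_n))$, hence
\begin{equation*}
  u(\xi_n)\leq 2u(\xi)-u(\eta_n)\longrightarrow u(\xi).
\end{equation*}
Monotonicity already forces $u(\xi_n)\geq u(\xi)$, so $u(\xi_n)\to u(\xi)$, i.e., $u$ is continuous from above.

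With both one-sided continuities in hand, the final step is a straightforward sandwich. For any $\xi_n\to\xi$ a.s. with $|\xi_n|\leq\zeta\in L_{\Phi^*}^+$, put $\underline\xi_n:=\inf_{k\geq n}\xi_k$ and $\overline\xi_n:=\sup_{k\geq n}\xi_k$; these lie in $L_{\Phi^*}$ (both dominated by $\zeta$) and satisfy $\underline\xi_n\uparrow\xi$ and $\overline\xi_n\downarrow\xi$ a.s. By continuity from below and from above, $u(\underline\xi_n)\to u(\xi)$ and $u(\overline\xi_n)\to u(\xi)$, while monotonicity gives $u(\underline\xi_n)\leq u(\xi_n)\leq u(\overline\xi_n)$, forcing $u(\xi_n)\to u(\xi)$. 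This verifies condition~(5) of Theorem~\ref{thm:MackeyConti} for $-u$, so $-u$ (equivalently $u$) is $\tau(L_{\Phi^*},L_\Phi)$-continuous on $L_{\Phi^*}$.
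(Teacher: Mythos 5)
Your proof is correct and follows essentially the same route as the paper's: the identical concavity trick with $2\xi-\xi_n\uparrow\xi$ to upgrade continuity from below to continuity from above, then monotonicity to obtain order continuity, then Theorem~\ref{thm:MackeyConti} (5) $\Rightarrow$ (1). The only differences are cosmetic: you spell out the sandwich argument via $\inf_{k\geq n}\xi_k$ and $\sup_{k\geq n}\xi_k$ that the paper compresses into ``again by the monotonicity,'' and you supply the easy ``only if'' direction that the paper leaves to the reader.
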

\begin{proof}
  Given that $u$ is finite, monotone and convave, the continuity from
  below implies the continuity from above. For if
  $\xi_n\downarrow\xi$, then
  $u(\xi)\geq \frac12 u(\xi_n)+\frac12 u(2\xi-\xi_n)$ by the
  concavity, so the continuity from below and the monotonicity imply
  $0\leq u(\xi_n)-u(\xi)\leq u(\xi)-u(2\xi-\xi_n)\downarrow 0$ since
  $2\xi-\xi_n\uparrow \xi$.
  In
  particular, $u$ is $\sigma(L_{\Phi^*},L_\Phi)$-usc. On the other
  hand, again by the monotonicity, the continuity of $u$ from below is
  equivalent to the property that $u(\xi)=\lim_nu(\xi_n)$ whenever
  $\xi_n\rightarrow \xi$ a.s. and $(\xi_n)_n$ is order bounded in
  $L_{\Phi^*}$. The result now follows from
  Theorem~\ref{thm:MackeyConti}.
\end{proof}

\appendix

\section*{Appendix}
\label{sec:AppA}
\addcontentsline{toc}{section}{Appendix}

\begin{proof}[Proof of Proposition~\ref{prop:ContiLinfty1}]
  Only (3) $\Rightarrow$ (1) deserves a proof. (3) implies, by
  Proposition~\ref{prop:LinftyLSC1}, $f=f^{**}$, and
  $|\EB[\eta\ind_A]|\leq \frac1n\left(f(n\ind_A)\vee
    f(-n\ind_A)+c\right)$
  for $A\in\FC$ and $\eta\in L_1$ with $f^*(\eta)\leq c$ by Young's
  inequality; thus (3) implies that $\{\eta\in L_1: f^*(\eta)\leq c\}$
  is uniformly integrable, hence $\sigma(L_1,L_\infty)$-compact by the
  Dunford-Pettis theorem. Now Moreau's theorem \citep{MR0160093} shows
  that $f$ is $\tau(L_\infty,L_1)$-continuous.
\end{proof}

\begin{proof}[Proof of Lemma~\ref{lem:CompactSets}]
  For each $\xi\in L_{\Phi^*}$, $\eta\mapsto \eta\xi$ continuously
  maps $(L_\Phi,\sigma(L_\Phi,L_{\Phi^*}))$ into
  $(L_1,\sigma(L_1,L_\infty))$ since $\xi\zeta\in L_1$,
  $\forall \zeta\in L_\infty$.  Thus if $A$ is relatively
  $\sigma(L_\Phi,L_{\Phi^*})$-compact, its image $A\xi$ is relatively
  weakly compact in $L_1$, i.e. uniformly integrable. Conversely, if
  $A\xi$, $\xi\in L_{\Phi^*}$, are uniformly integrable, then
  $c_\xi:=\sup_{\eta\in A}\EB[|\eta\xi|]<\infty$ for each
  $\xi\in L_{\Phi^*}$, so $A$ is pointwise bounded in the algebraic
  dual $L_{\Phi^*}^\#$ of $L_{\Phi^*}$, and $A$ is relatively
  $\sigma(L_1,L_\infty)$-compact in $L_1$. Thus if
  $(\eta_\alpha)_\alpha$ is a net in $A$ with the pointwise limit
  $f(\xi)=\lim_\alpha\EB[\eta_\alpha\xi]$ in $L_{\Phi^*}^\#$, there is
  a unique $\eta_0\in L_1$ such that
  $f|_{L_\infty}(\xi)=\EB[\eta_0\xi]$ for $\xi\in L_\infty$. Then for
  each $\xi\in L_{\Phi^*}$,
  $\EB[|\eta_0\xi|]=\sup_n\EB[\eta_0\xi\ind_{\{|\xi|\leq n\}}
  \mathrm{sgn}(\eta_0\xi)]
  =\sup_nf(\xi\ind_{\{|\xi|\leq n\}}\mathrm{sgn}(\eta_0\xi)) \leq
  c_\xi$,
  hence $\eta_0\in L_\Phi$, while
  $|f(\xi)-f(\xi\ind_{\{|\xi|\leq
    n\}})|=|f(\xi\ind_{\{|\xi|>n\}})|\leq
  \sup_{\eta\in A}\EB[|\eta\xi|\ind_{\{|\xi|>n\}}] \rightarrow 0$
  since $A\xi$ is uniformly integrable; hence
  $f(\xi)=\EB[\eta_0\xi]$. Therefore $A$ is pointwise bounded and its
  $\sigma(L_{\Phi^*}^\#,L_{\Phi^*})$-closure in $L_{\Phi^*}^\#$ lies
  in $L_\Phi$; hence $A$ is relatively
  $\sigma(L_\Phi,L_{\Phi^*})$-compact.
\end{proof}

\small
\def\cprime{$'$}

\end{document}